\theoremstyle{plain}
\newtheorem{thm}{Theorem}[section]
\newtheorem{cor}[thm]{Corollary}
\theoremstyle{definition}
\newtheorem{rmk}[thm]{Remark}
\newtheorem{example}[thm]{Example}
\def\dim{\mathop{\hbox {dim}}\nolimits}
\newcommand{\fra}{\mathfrak{a}}
\newcommand{\frg}{\mathfrak{g}}
\newcommand{\frh}{\mathfrak{h}}
\newcommand{\frk}{\mathfrak{k}}
\newcommand{\frl}{\mathfrak{l}}
\newcommand{\frp}{\mathfrak{p}}
\newcommand{\frq}{\mathfrak{q}}
\newcommand{\frt}{\mathfrak{t}}
\newcommand{\fru}{\mathfrak{u}}
\newcommand{\bbC}{\mathbb{C}}
\newcommand{\bbN}{\mathbb{N}}
\newcommand{\bbR}{\mathbb{R}}
\newcommand{\bbZ}{\mathbb{Z}}
\newcommand{\caL}{\mathcal{L}}
\newcommand{\caR}{\mathcal{R}}
\newcommand{\be}{\begin {equation}}
\newcommand{\ee}{\end {equation}}
\newcommand{\bp}{\begin {proof}}
\newcommand{\ep}{\end {proof}}
\begin{document}

\title[Dirac series for some real exceptional Lie groups]
{Dirac series for some real exceptional Lie groups}

\author{Jian Ding}
\address[Ding]{School of Mathematics, Hunan University, Changsha 410082,
P.~R.~China}
\email{dingjain@hnu.edu.cn}

\author{Chao-Ping Dong}
\address[Dong] {Mathematics and Science College, Shanghai Normal University, Shanghai 200234, P.~R.~China }
\email{chaopindong@163.com}

\author{Liang Yang}
\address[Yang]{College of Mathematics, Sichuan University, Chengdu 610064,
P.~R.~China}
\email{malyang@scu.edu.cn}

\abstract{Up to equivalence, this paper classifies all the irreducible unitary representations with non-zero Dirac cohomology for the following simple real exceptional Lie groups: ${\rm EI}=E_{6(6)}, {\rm EIV}=E_{6(-26)}, {\rm FI}=F_{4(4)}, {\rm FII}=F_{4(-20)}$. Along the way, we find an irreducible unitary representation of $F_{4(4)}$ whose Dirac index vanishes, while its Dirac cohomology is non-zero. This disproves a conjecture raised in 2015 asserting that there should be no cancellation between the even part and the odd part of the Dirac cohomology.}
 \endabstract

\subjclass[2010]{Primary 22E46.}

\keywords{Dirac cohomology, Dirac index, elliptic representation, unipotent representation, unitary representation.}

\maketitle
\section{Introduction}

Let $G(\bbC)$ be a complex connected  simple algebraic group with finite center.
Let $\sigma: G(\bbC) \to G(\bbC)$ be a \emph{real form} of $G(\bbC)$. That is, $\sigma$ is an antiholomorphic Lie group automorphism and $\sigma^2={\rm Id}$. Let $\theta: G(\bbC)\to G(\bbC)$ be the involutive algebraic automorphism of $G(\bbC)$ corresponding to $\sigma$ via Cartan theorem (see Theorem 3.2 of \cite{ALTV}). Put $G=G(\bbC)^{\sigma}$ as the group of real points. Note that  $G$ must be in the Harish-Chandra class \cite{HC}.  Denote by $K(\bbC):=G(\bbC)^{\theta}$, and put $K:=K(\bbC)^{\sigma}$. Denote by $\frg_0$ the Lie algebra of $G$, and let $\frg_0=\frk_0\oplus \frp_0$ be the  Cartan decomposition corresponding to $\theta$ on the Lie algebra level. Denote by $\frh_{f, 0}=\frt_{f, 0}\oplus \fra_{f, 0}$ the unique $\theta$-stable fundamental Cartan subalgebra of $\frg_0$. That is, $\frt_{f, 0}\subseteq \frk_0$ is maximal abelian.   As usual, we drop the subscripts to stand for the complexified Lie algebras. For example, $\frg=\frg_0\otimes_{\bbR}\bbC$, $\frh_f=\frh_{f, 0}\otimes_{\bbR}\bbC$ and so on. We fix a non-degenerate invariant symmetric bilinear form $B$ on $\frg$. Its restrictions to $\frk$, $\frp$, etc., will also be denoted by the same symbol.

The current paper aims to classify $\widehat{G}^d$---the set of equivalence classes of irreducible unitary $(\frg, K)$-modules with non-zero Dirac cohomology---for some real exceptional Lie groups. After Huang, we call $\widehat{G}^d$ the \emph{Dirac series} of $G$.  Among the entire unitary dual $\widehat{G}$, the Dirac series are precisely the ones where Parthasarathy's Dirac operator inequality \cite{P1,P2} becomes equality on some $K$-types. See  Section \ref{sec-Dirac} for more. Thus understanding these modules thoroughly should be very interesting. Indeed, the problem of classifying $\widehat{G}^d$ remains open ever since Huang and Pand\v zi\'c's proof \cite{HP} of the Vogan conjecture in 2002.

One tool for us is Theorem A of \cite{D17}, which says that $\widehat{G}^d$ consists of the scattered part (finitely many scattered representations) and the string part (finitely many strings of representations). Moreover, the string part of $\widehat{G}^d$ comes from the scattered part of $\widehat{L}^d$ via cohomological induction, where $L$ runs over the Levi factors of the finitely many $\theta$-stable  parabolic subgroups of $G$. Recall that a member of $\widehat{G}^d$ is \textbf{scattered} if it is not cohomologically induced from any irreducible unitary module in the good range from any proper $\theta$-stable parabolic subgroup of $G$. This theorem allows us to completely determine $\widehat{G}^d$ after a finite calculation and to organize it neatly for those $G$ whose rank is small. Another tool is the software \texttt{atlas} \cite{At}, which detects unitarity based on the algorithm due to Adams, van Leeuwen, Trapa and Vogan in \cite{ALTV}.

In this paper, we will classify the Dirac series for the following groups:
$$
{\rm EI}=E_{6(6)}, \quad {\rm EIV}=E_{6(-26)}, \quad
{\rm FI}=F_{4(4)}, \quad {\rm FII}=F_{4(-20)}.
$$
In the statement of the following results, by the \textbf{FS-scattered part} of $\widehat{G}^d$ we mean the members of $\widehat{G}^d$ whose KGB elements have full support. This is a subset of the scattered part of $\widehat{G}^d$. See the end of Section \ref{sec-algorithm} for an explanation, and Section \ref{sec-G} for examples.

\medskip
\noindent\textbf{Theorem A.}
\emph{The set $\widehat{\rm FII}^{\mathrm{d}}$ consists of two FS-scattered representations (see Table \ref{table-FII-scattered-part}) whose spin-lowest $K$-types are all unitarily small, and ten strings of representations (see Table \ref{table-FII-string-part}). Moreover, each  representation $\pi\in\widehat{\rm FII}^{\mathrm{d}}$ has a unique spin-lowest $K$-type occurring with multiplicity one.}
\medskip

The notion spin-lowest $K$-type will be given in Section \ref{sec-lambda-spin}, and that of unitarily small (\emph{u-small} for short henceforth) was introduced by Salamanca-Riba and Vogan \cite{SV}, see \eqref{u-small}.

\medskip
\noindent\textbf{Theorem B.}
\emph{The set $\widehat{\rm EIV}^{\mathrm{d}}$ consists of two FS-scattered representations (see Table \ref{table-EIV-scattered-part}) whose spin-lowest $K$-types are all u-small, and nine strings of representations (see Table \ref{table-EIV-string-part}). Moreover, each  representation $\pi\in\widehat{\rm EIV}^{\mathrm{d}}$ has a unique spin-lowest $K$-type occurring with multiplicity one.}
\medskip

\medskip
\noindent\textbf{Theorem C.}
\emph{The set $\widehat{\rm FI}^{\mathrm{d}}$ consists of twenty two FS-scattered representations (see Table \ref{table-FI-scattered-part}) whose spin-lowest $K$-types are all u-small, and seventy six strings of representations (see Tables \ref{table-FI-string-part-card0}---\ref{table-FI-string-part-card3}). Moreover, each spin-lowest $K$-type of any  representation $\pi\in\widehat{\rm FI}^{\mathrm{d}}$ occurs with multiplicity one.}
\medskip

It is interesting to note that members of $\widehat{\rm FI}^{\mathrm{d}}$ could have more than one spin-lowest $K$-types. This phenomenon was observed earlier by Barbasch and Pand\v zi\'c on some classical groups \cite{BP15}. A careful look at the Dirac cohomology of one FS-scattered representation of FI allows us to disprove Conjecture 10.3 of \cite{H15}. See Example \ref{exam-counter}. It proves that even for Dirac series of equal rank groups, the Dirac index (see \cite{MPV}) can vanish.

\medskip
\noindent\textbf{Theorem D.}
\emph{The set $\widehat{\rm EI}^{\mathrm{d}}$ consists of thirteen FS-scattered representations (see Table \ref{table-EI-scattered-part}) whose spin-lowest $K$-types are all u-small, and forty three  strings of representations (see Tables \ref{table-EI-string-part-card0}---\ref{table-EI-string-part-card5}). Moreover, each spin-lowest $K$-type of any  representation $\pi\in\widehat{\rm FI}^{\mathrm{d}}$ occurs with multiplicity one.}
\medskip

It is interesting to note that distinct scattered members of $\widehat{\rm EI}^d$ can have the same spin-lowest $K$-types (hence the same Dirac cohomology). See Tables \ref{table-EI-scattered-part} and \ref{table-G-scattered-part}. We also note that a spin-lowest $K$-type of an arbitrary irreducible unitary representation can have multiplicity greater than one, see Example \ref{exam-spin-mult-big}.

Our calculations suggest that the number of FS-scattered members of $\widehat{G}^d$ (denoted by $N_{\rm FS}$) is related to the number of u-small $K$-types (denoted by $N_{\rm us}$). A brief summary is given in Table \ref{FS-us}.

A very interesting phenomenon that we must mention is that certain FS-scattered members can be viewed as the limit cases of some strings. See Example \ref{exam-G-scattered-via-string}.

\begin{table}
\centering
\caption{$N_{\rm FS}$ and $N_{\rm us}$}
\begin{tabular}{c|c|c|c|c|c}
& $E_{6(-26)}$ &   $E_{6(6)}$   & $F_{4(-20)}$ & $F_{4(4)}$ & $G_{2(2)}$  \\
\hline
$N_{\rm FS}$ & $2$ & $13$ & $2$ & $22$ & $3$\\
$N_{\rm us}$ & $37$ & $484$ & $27$ & $544$ & $16$
\end{tabular}
\label{FS-us}
\end{table}

In the spirit of Conjecture 1.1 of Barbasch and Pand\v zi\'c \cite{BP11}, the scattered part of $\widehat{G}^d$ should offer us some unipotent representations---the most singular unitary representations which are believed to be the building blocks of $\widehat{G}$. Therefore, our classifications should also be helpful for understanding the entire unitary dual of the relevant exceptional Lie groups.

The paper is organized as follows. We recall necessary background in Section 2. Then we adopt  the algorithm for computing the scattered part of $\widehat{G}^d$ from \cite{D17} in Section 3. Based on it, classifications of $\widehat{\rm FII}^d$, $\widehat{\rm EIV}^d$, $\widehat{\rm FI}^d$ and $\widehat{\rm EI}^d$ are reported in Sections 4-7.
Section 8 illustrates the toy case $\widehat{G_{2(2)}}^d$ carefully.  Note that the unitary dual of $G_{2(2)}$ has been determined by Vogan \cite{Vog94} in 1994.

The root systems are adopted as in Appendix C of Knapp \cite{Kn} throughout the paper. We always use $\bbN$ to stand for the set $\{0,1,2,\dots\}$.

\section{Preliminaries}
This section aims to  collect necessary preliminaries. We adopt the basic notation $G$, $\theta$, $K$ etc.,  as in the introduction. Let $H_f=T_f A_f$ be the $\theta$-stable fundamental Cartan subgroup for $G$. Then  $T_f$ is a maximal torus of $K$.
 Put $\frh_f=\frt_f+\fra_f$ for the Cartan decomposition  on the complexified Lie algebras level.
Recall that a non-degenerate invariant symmetric bilinear form $B$ has been fixed on $\frg$. Its restrictions to $\frk$, $\frp$, etc., will also be denoted by $B$.

We denote by $\Delta(\frg, \frh_f)$ (resp.,  $\Delta(\frg, \frt_f)$) the root system of $\frg$ with respect to $\frh_f$ (resp., $\frt_f$).
The root system of $\frk$ with respect to $\frt_f$ is denoted by $\Delta(\frk, \frt_f)$.
Note that $\Delta(\frg,
\frh_f)$ and $\Delta(\frk, \frt_f)$ are reduced, while $\Delta(\frg, \frt_f)$ is not reduced in general. The corresponding Weyl groups are written as $W(\frg, \frh_f)$, $W(\frg, \frt_f)$ and $W(\frk, \frt_f)$.

We fix compatible choices of positive roots $\Delta^+(\frg, \frh_f)$ and $\Delta^+(\frg, \frt_f)$ so that a positive root in $\Delta(\frg, \frh_f)$ restricts to a positive root in $\Delta(\frg, \frt_f)$. Note that $\Delta^+(\frg, \frt_f)$ is a union of the set of positive compact roots $\Delta^+(\frk, \frt_f)$ and the set of positive noncompact roots  $\Delta^+(\frp, \frt_f)$.
As usual, we denote by  $\rho$ (resp. $\rho_c$, $\rho_n$) the half sum of roots in $\Delta^+(\frg, \frh_f)$ (resp. $\Delta^+(\frk, \frt_f)$, $\Delta^+(\frp, \frt_f)$). Then $\rho, \rho_c, \rho_n\in i\frt_{f, 0}^*$ and $\rho_n=\rho-\rho_c$.

\subsection{$\texttt{atlas}$ height, lambda norm and spin norm}\label{sec-lambda-spin}
We will simply refer to a $\frk$-type by  its highest weight $\mu$. Choose a positive root system $(\Delta^+)^\prime(\frg, \frh_f)$ making $\mu+2\rho_c$ dominant. Let $\rho^\prime$ be the half sum of roots in $(\Delta^+)^\prime(\frg, \frh_f)$. After \cite{SV}, we define $\lambda_a(\mu)$ to be the projection of $\mu+2\rho_c- \rho^\prime$ to the dominant Weyl chamber of $(\Delta^+)^\prime(\frg, \frh_f)$. Then
\begin{equation}\label{lambda-norm}
\|\mu\|_{\rm{lambda}}:=\|\lambda_a(\mu)\|.
\end{equation}
Here $\|\cdot\|$ is the norm induced from the form $B$. The number \eqref{lambda-norm}  is  independent of the choice of  $(\Delta^+)^\prime(\frg, \frh_f)$, and it is the \emph{lambda norm} of the $\frk$-type $\mu$ \cite{Vog81}. It is worth noting that the trivial $\frk$-type could have non-zero lambda norm. Now the \texttt{atlas} \emph{height} of $\mu$ can be computed by
\begin{equation}\label{atlas-height}
\sum_{\alpha\in (\Delta^+)^\prime(\frg, \frh_f)}\langle \lambda_a(\mu),  \alpha^{\vee}\rangle.
\end{equation}

For the fixed $\Delta^+(\frk, \frt_f)$, let us enumerate all the compatible choices of positive roots for $\Delta(\frp, \frt_f)$ as
$$
(\Delta^+)^{(j)}(\frp, \frt_f), \quad 0\leq j\leq s-1.
$$
Here $(\Delta^+)^{(0)}(\frp, \frt_f)=\Delta^+(\frp, \frt_f)$ and
$$
s=\frac{|W(\frg, \frt_f)|}{|W(\frk, \frt_f)|}.
$$
Denote by $\rho_n^{(j)}$ the half sum of roots in $(\Delta^+)^{(j)}(\frp, \frt_f)$. In particular, $\rho_n^{(0)}=\rho_n$. Now the \emph{spin norm} of the $\frk$-type $\mu$ \cite{D} is defined as
\begin{equation}\label{spin-norm}
\|\mu\|_{\rm{spin}}:=\min_{0\leq j\leq s-1} \|\{\mu-\rho_n^{(j)}\}+\rho_c\|,
\end{equation}
where $\{\mu-\rho_n^{(j)}\}$ stands for the unique dominant weight to which $\mu-\rho_n^{(j)}$ is conjugate under the action of $W(\frk, \frt_f)$.
 Note that $\{\mu-\rho_n^{(j)}\}$ is a \emph{PRV component} \cite{PRV} of the tensor product of the $\frk$-type $\mu$ and the spin module ${\rm Spin}_G$.
 For any $\frk$-type $\mu$, it is shown in \cite{D} that
\begin{equation}\label{lambda-spin-norm}
\|\mu\|_{\rm spin}\geq \|\mu\|_{\rm lambda}.
\end{equation}
Since $G$ is in the Harish-Chandra class, we may and we will define the lambda norm (resp., spin norm) of a $K$-type as
the lambda norm (resp., spin norm) of any of its highest weights. Now a $K$-type of a $(\frg, K)$-module $\pi$ is called a \emph{lambda-lowest} (resp., \emph{spin-lowest}) \emph{$K$-type} if its lambda norm (resp. spin norm) attains the minimum among all the $K$-types of $\pi$. Theorem 1.2 of \cite{DD16}  characterizes those $K$-types  whose spin norm is equal to their lambda norm.

Finally, recall from \cite{SV} that a $\frk$-type $\mu$ is called  \emph{u-small} if $\mu$ lies in the following convex hull
\begin{equation}\label{u-small}
\left\{   \sum_{\alpha\in\Delta(\frp, \frt_f)} b_{\alpha} \alpha\mid 0\leq b_{\alpha}\leq 1\right\}.
\end{equation}

\subsection{Dirac cohomology}\label{sec-Dirac}

Fix
an orthonormal basis $Z_1, \dots, Z_n$ of $\frp_0$ with respect to
the inner product induced by the form $B$. Let $U(\frg)$ be the
universal enveloping algebra of $\frg$ and let $C(\frp)$ be the
Clifford algebra of $\frp$ with respect to $B$. Living in $U(\frg)\otimes C(\frp)$, the Dirac operator is defined as
$$D=\sum_{i=1}^{n}\, Z_i \otimes Z_i.$$
It is easy to check that $D$ does not depend on the choice of the
orthonormal basis $Z_i$ and it is $K$-invariant for the diagonal
action of $K$ given by adjoint actions on both factors.

To achieve a better understanding of the unitary dual $\widehat{G}$, Vogan introduced  Dirac cohomology in 1997 \cite{Vog97}. Indeed, let $\pi$ be a
($\frg$, $K$)-module and choose a spin module  ${\rm Spin}_G$  for $C(\frp)$.
Then $D\in U(\frg)\otimes C(\frp)$ acts on $\pi\otimes {\rm Spin}_G$, and the Dirac
cohomology of $\pi$ is defined as the $\widetilde{K}$-module
\begin{equation}\label{def-Dirac-cohomology}
H_D(\pi)=\text{Ker}\, D/ (\text{Im} \, D \cap \text{Ker} D).
\end{equation}
Here $\widetilde{K}$ is the subgroup of $K\times \text{Pin}\,\frp_0$ consisting of all pairs $(k, s)$ such that $\text{Ad}(k)=p(s)$, where $\text{Ad}: K\rightarrow \text{O}(\frp_0)$ is the adjoint action, and $p: \text{Pin}\,\frp_0\rightarrow \text{O}(\frp_0)$ is the pin double covering map. Namely, $\widetilde{K}$ is constructed from the following diagram:
\[
\begin{CD}
\widetilde{K} @>  >  > {\rm Pin}\, \frp_0 \\
@VVV  @VVpV \\
K @>{\rm Ad}>> O(\frp_0)
\end{CD}
\]
Note that $\widetilde{K}$ acts  on $\pi$
through $K$ and on ${\rm Spin}_G$ through the pin group
$\text{Pin}\,{\frp_0}$. Moreover, since ${\rm Ad}(k) (Z_1), \dots, {\rm Ad}(k) (Z_n)$ is still an orthonormal basis of $\frp_0$, it follows that $D$ is $\widetilde{K}$ invariant. Therefore, ${\rm  Ker} D$, ${\rm Im} D$, and $H_D(X)$ are $\widetilde{K}$ modules.

 We embed $\frt_f^{*}$ as a subspace of $\frh_f^{*}$ by setting  the linear functionals on $\frt_f$ to be zero on $\fra_f$. Huang and Pand\v zi\'c proved the  Vogan conjecture in Theorem 2.3 of \cite{HP}. Here we recall a slight extension of this result to possibly disconnected Lie groups.

\begin{thm}{\rm (Theorem A \cite{DH})}\label{thm-HP}
Let $\pi$ be an irreducible ($\frg$, $K$)-module.
Assume that the Dirac
cohomology of $\pi$ is nonzero, and let $\gamma\in\frt_f^{*}\subset\frh_f^{*}$ be any highest weight of a $\widetilde{K}$-type  in $H_D(X)$. Then the infinitesimal character $\Lambda$ of $\pi$ is conjugate to
$\gamma+\rho_{c}$ under $W(\frg,\frh_f)$.
\end{thm}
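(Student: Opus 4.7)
The plan is to follow the argument of Huang and Pand\v zi\'c in \cite{HP} and to verify that each step remains valid when $K$ is allowed to be disconnected. The algebraic heart of the original Vogan conjecture is the identity, inside $U(\frg)\otimes C(\frp)$, asserting that for every $z\in Z(\frg)$ there exists $\zeta\in Z(\frk_\Delta)$, where $\frk_\Delta$ denotes the diagonal copy of $\frk$ embedded by $X\mapsto X\otimes 1+1\otimes\alpha_{\frp}(X)$, such that
$$z\otimes 1-\zeta\in D\bigl(U(\frg)\otimes C(\frp)\bigr)+\bigl(U(\frg)\otimes C(\frp)\bigr)D,$$
together with the fact that, after composing with the Harish-Chandra isomorphisms for $Z(\frg)$ and $Z(\frk)$, the assignment $z\mapsto\zeta$ corresponds to the natural embedding $\frt_f^\ast\hookrightarrow\frh_f^\ast$ shifted by $\rho_c$.

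First I would apply this to a highest weight vector. Let $\pi$ be an irreducible $(\frg,K)$-module with $H_D(\pi)\neq 0$ and infinitesimal character $\chi_\Lambda$, and fix a $\wt K$-type $\sigma\subseteq H_D(\pi)$ of highest weight $\gamma\in\frt_f^\ast\subseteq\frh_f^\ast$; choose a highest weight vector $\bar v\in\sigma$ and a representative $v\in\ker D\subseteq\pi\otimes{\rm Spin}_G$. Because $z\otimes 1$ acts on $\pi\otimes{\rm Spin}_G$ by $\chi_\Lambda(z)$, the displayed identity implies $\zeta\cdot v-\chi_\Lambda(z)\,v\in\im D\cap\ker D$, so $\zeta$ acts on $\bar v$ in $H_D(\pi)$ by the scalar $\chi_\Lambda(z)$.

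Second, the $\frk$-Harish-Chandra isomorphism determines this scalar intrinsically from the $\frt_f$-weight of $\bar v$: namely, on any $\frk$-vector of highest weight $\gamma$ the element $\zeta$ acts by $\chi^{\frk}_{\gamma+\rho_c}(\zeta)$. Matching the two expressions for every $z\in Z(\frg)$, we conclude that $\chi_\Lambda=\chi_{\gamma+\rho_c}$ as characters of $Z(\frg)$, which is equivalent to $\Lambda$ being $W(\frg,\frh_f)$-conjugate to $\gamma+\rho_c$ in $\frh_f^\ast$.

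The modification for disconnected $K$---the point of \cite{DH}---is essentially formal: the argument above is entirely Lie-algebraic and relies only on the $\wt K$-equivariance of $H_D(\pi)$, never on $K$ being connected. The genuine obstacle is therefore not the disconnected-group passage but the underlying Vogan algebraic identity itself, which Huang and Pand\v zi\'c prove through a delicate associated-graded calculation combined with Kostant's cubic Dirac operator identity. Granting that lemma, the proof reduces to checking that the choice of highest weight vector, the scalar action of $Z(\frk_\Delta)$, and the two Harish-Chandra identifications all remain valid across the various connected components of $\wt K$, which is transparent once the $\wt K$-module structure on $H_D(\pi)$ recorded at the end of Section~\ref{sec-Dirac} is in place.
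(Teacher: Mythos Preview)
The paper does not prove this theorem; it merely quotes it as Theorem~A of \cite{DH} (the extension of the Huang--Pand\v zi\'c result to possibly disconnected $K$), so there is no in-paper argument to compare against. Your sketch is an accurate outline of the Huang--Pand\v zi\'c strategy together with the observation that the passage to disconnected $K$ is essentially formal, which is indeed the content of the cited reference.

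One small point worth tightening: when $K$ is disconnected, an irreducible $\wt K$-module $\sigma$ need not be $\frk$-irreducible, so ``highest weight of a $\wt K$-type'' means the highest weight of some $\frk$-irreducible constituent of $\sigma|_{\frk}$. Your choice of $\bar v$ should accordingly be a highest weight vector for the $\frk_\Delta$-action (not for $\wt K$), and then $\zeta\in Z(\frk_\Delta)$ acts on it by $\chi^{\frk}_{\gamma+\rho_c}(\zeta)$ as you claim. With that clarification your argument goes through.
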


Guaranteed by the above theorem, a necessary condition for $\pi$ to have non-zero Dirac cohomology is that $\Lambda$ is \emph{real} in the sense of Definition 5.4.11 of \cite{Vog81}. That is, $\Lambda\in i\frt_{f,0}^*+\fra_{f,0}^*$.

We care the most about the case that $\pi$ is unitary. Then $D$ is self-adjoint with respect to a natural inner product on $\pi\otimes {\rm Spin}_G$, $\text{Ker} D \cap \text{Im} D=0$, and
\begin{equation}\label{Dirac-unitary}
H_D(\pi)=\text{Ker}\, D=\text{Ker}\, D^2.
\end{equation}
Moreover, $D^2$ has non-negative eigenvalue on any $\widetilde{K}$-type of $\pi\otimes {\rm Spin}_G$. Recall from Proposition 3.1.6 of \cite{HP2} that
$$
D^2=\Omega_{\frk_{\Delta}} -\Omega_{\frg}\otimes 1 +
(\|\rho_c\|^2-\|\rho\|^2) 1 \otimes 1.
$$
Then we are led to Parthasarathy's Dirac operator inequality \cite{P1, P2}
$$
\|\gamma+\rho_c\| \geq \|\Lambda\|,
$$
where $\gamma$ is any highest weight of any $\widetilde{K}$-type in $X\otimes {\rm Spin}_G$.
Taking the PRV-components \cite{PRV} of the tensor products of the $K$-types and ${\rm Spin}_G$ into account, this can be encapsulated as
\begin{equation}\label{Dirac-inequality}
\|\mu\|_{\rm spin}\geq \|\Lambda\|,
\end{equation}
where $\mu$ is a highest weight of any $K$-type occurring in $\pi$.
Moreover, in view of Theorem 3.5.2 of \cite{HP2}, \eqref{Dirac-inequality} becomes equality on certain $K$-types of $\pi$ if and only if $H_D(\pi)$ is non-vanishing.

\subsection{Cohomological induction}

Let $\frq= \frl\oplus\fru$ be  a $\theta$-stable parabolic subalgebra of $\frg$ with Levi factor $\frl$ and nilpotent radical $\fru$. Set $L=N_{G}(\frq)$.

We arrange the positive root systems so that
$$
\Delta(\fru, \frh_f)\subseteq \Delta^{+}(\frg,\frh_f),
\quad \Delta^{+}(\frl, \frh_f)=\Delta(\frl,
\frh_f)\cap \Delta^{+}(\frg,\frh_f).
$$
Let $\rho^{L}$  denote the half sum of roots in
$\Delta^{+}(\frl,\frh_f)$,  and denote by $\rho(\fru)$ (resp., $\rho(\fru\cap\frp)$) the half sum of roots in $\Delta(\fru,\frh_f)$ (resp., $\Delta(\fru\cap\frp,\frh_f)$). Then
\begin{equation}\label{relations}
\rho=\rho^{L}+\rho(\fru).
\end{equation}

Let $Z$ be an ($\frl$, $L\cap K$) module. Cohomological induction functors lift $Z$ to certain ($\frg, K$)-modules $\caL_j(Z)$ and $\caR^j(Z)$, where $j$ is a nonnegative integer.
Suppose that $Z$ has infinitesimal character $\lambda_L\in \frh_f^*$. As in \cite{KV}, we say that
$Z$ is {\it good} or {\it in good range} if
\begin{equation}\label{def-good}
\mathrm{Re}\,\langle \lambda_L +\rho(\fru),\, \alpha^\vee \rangle >
0, \quad \forall \alpha\in \Delta(\fru, \frh_f).
 \end{equation} We say that $Z$ is {\it weakly good} if
\begin{equation}\label{def-weakly-good}
\mathrm{Re}\, \langle \lambda_L +\rho(\fru),\, \alpha^\vee \rangle
\geq 0, \quad \forall \alpha\in \Delta(\fru, \frh_f).
\end{equation}

\begin{thm}\label{thm-Vogan-coho-ind}
{\rm (\cite{Vog84} Theorems 1.2 and 1.3, or \cite{KV} Theorems 0.50 and 0.51)}
Suppose the admissible
 ($\frl$, $L\cap K$)-module $Z$ is weakly good.  Then we have
\begin{itemize}
\item[(i)] $\caL_j(Z)=\caR^j(Z)=0$ for $j\neq S(:=\emph{\text{dim}}\,(\fru\cap\frk))$.
\item[(ii)] $\caL_S(Z)\cong\caR^S(Z)$ as ($\frg$, $K$)-modules.
\item[(iii)]  if $Z$ is irreducible, then $\caL_S(Z)$ is either zero or an
irreducible ($\frg$, $K$)-module with infinitesimal character $\lambda_L+\rho(\fru)$.
\item[(iv)]
if $Z$ is unitary, then $\caL_S(Z)$, if nonzero, is a unitary ($\frg$, $K$)-module.
\item[(v)] if $Z$ is in good range, then $\caL_S(Z)$ is nonzero, and it is unitary if and only if $Z$ is unitary.
\end{itemize}
\end{thm}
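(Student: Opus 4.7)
The plan is to unpack the derived-functor definitions of $\caL_j$ and $\caR^j$---via the Bernstein and Zuckerman functors composed with $\ind^{\frg, L\cap K}_{\frq, L\cap K}$ and $\pro^{\frg, L\cap K}_{\frq, L\cap K}$ respectively---and then handle the five assertions in turn using the Hochschild-Serre spectral sequence, the Casselman-Osborne theorem, and Vogan's signature theorem. A preliminary step is to fix a Koszul-type resolution of the generalized Verma module $U(\frg) \otimes_{U(\frq)} Z$ on which to run these tools.

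For parts (i) and (ii), I would use Hochschild-Serre $H^p(\frl, H^q(\fru, \cdot)) \Rightarrow H^{p+q}(\frg, \cdot)$ together with Enright-Wallach's duality (refined by Hecht-Mili\v ci\'c-Schmid-Wolf) identifying $\caL_S \cong \caR^S$ up to a twist by the top exterior power of $\fru \cap \frk$. The vanishing off the middle degree $S$ in (i) comes from Casselman-Osborne: any weight that could appear in $\caR^j(Z)$ for $j\neq S$ would force, via the Harish-Chandra homomorphism, an infinitesimal character conjugate to $\lambda_L + \rho(\fru)$ shifted by a nonzero sum of $\fru$-roots, and the weakly-good inequality \eqref{def-weakly-good} rules out such conjugacies.

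For (iii), the infinitesimal character statement follows again from Casselman-Osborne applied to any irreducible subquotient, while the irreducibility is obtained by Vogan's translation argument: in the strictly good range, the bottom-layer $K$-type theorem produces a unique $K$-type of minimal $\frt_f$-norm occurring with multiplicity one and generating the module; one then extends to the weakly-good range by tensoring with a finite-dimensional representation and applying the Jantzen filtration, which preserves irreducibility at the walls. Part (v) on nonvanishing follows directly from the bottom-layer formula: the $L\cap K$-types of $Z$ lift to $K$-types of $\caL_S(Z)$ with the same multiplicities, so nonzero $Z$ yields nonzero $\caL_S(Z)$; conversely, unitarity of $\caL_S(Z)$ restricts to unitarity of $Z$ on the bottom layer, giving the "if and only if" statement.

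The main obstacle is (iv), unitarity preservation in the weakly-good (possibly non-good) range. The key tool is Vogan's signature theorem, which expresses the signature of the invariant Hermitian form on $\caL_S(Z)$ in terms of the signature on $Z$ modified by contributions from the Shapovalov form on the generalized Verma factors appearing in the Koszul resolution. The weakly-good condition \eqref{def-weakly-good} is exactly what is needed to ensure that all these intermediate signature contributions are nonnegative, so that positivity transfers from $Z$ to $\caL_S(Z)$. Making this positivity argument rigorous at the walls of the good chamber, where Jantzen filtrations might a priori introduce sign cancellations between irreducible subquotients of the Verma modules, is the technical heart of Vogan's 1984 paper and is where I expect the bulk of the work to concentrate.
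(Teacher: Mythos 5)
There is nothing in the paper to compare against: Theorem \ref{thm-Vogan-coho-ind} is not proved there at all, but quoted from the literature (Vogan's 1984 paper, Theorems 1.2 and 1.3, and Knapp--Vogan, Theorems 0.50 and 0.51). So your proposal should be judged as an outline of the proofs in those references, and at that level it does identify the standard toolkit: the derived Bernstein/Zuckerman construction, hard duality to get $\caL_S(Z)\cong\caR^S(Z)$, translation functors to pass from the good to the weakly good range, bottom-layer $K$-types, and Vogan's signature analysis for preservation of unitarity. As a self-contained proof, however, it is only a plan; parts (iii) and (iv) in the weakly good range are the main theorems of Vogan's paper and of Knapp--Vogan Chapters VIII--IX, and none of that work is reproduced or even reducible to the ingredients you list without substantial further argument.

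Beyond incompleteness, two mechanisms are misstated. First, the vanishing in (i) below the middle degree cannot follow from a Casselman--Osborne/infinitesimal-character argument of the kind you describe: every $\caL_j(Z)$ and $\caR^j(Z)$, in every degree $j$, has the \emph{same} infinitesimal character $\lambda_L+\rho(\fru)$, so there are no ``shifted'' infinitesimal characters to exclude, and indeed the vanishing genuinely fails outside the weakly good range even though the infinitesimal-character statement persists. The weakly good inequality enters through a finer positivity/weight estimate (this is the content of the vanishing theorem in Knapp--Vogan), not through central characters. Second, in (v) it is not true that all $L\cap K$-types of $Z$ lift to $K$-types of $\caL_S(Z)$ with the same multiplicities; this holds only for bottom-layer $L\cap K$-types, and the good-range hypothesis is needed to guarantee that the bottom layer is nonempty (it then contains the lowest $L\cap K$-types, which is what gives nonvanishing), while the ``only if'' direction of (v) again requires the signature argument on the bottom layer rather than a formal restriction of unitarity. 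If you intend to present this material, the honest course is to do as the paper does and cite Vogan or Knapp--Vogan, or else commit to reproducing their arguments in full.
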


Assume the weight $\Lambda\in\frh_f^*$ is dominant for $\Delta^+(\frg, \frh_f)$.
We say $\Lambda$ is  {\it strongly regular} if
\begin{equation}\label{def-weakly-good}
\langle \Lambda-\rho,\, \alpha^\vee \rangle
\geq 0, \quad \forall \alpha\in \Delta^+(\frg, \frh_f).
\end{equation}

\begin{thm}\label{thm-SR} \emph{(Salamanca-Riba \cite{Sa})}
Let $X$ be an irreducible $(\frg, K)$-module with a strongly regular real infinitesimal character. If $X$ is unitary, then it is an $A_{\frq}(\lambda)$ module in the good range.
\end{thm}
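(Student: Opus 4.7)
The plan is to induct on $\dim G$ using Vogan's lowest-$K$-type machinery together with Theorem \ref{thm-Vogan-coho-ind}. Let $\mu$ be a lambda-lowest $K$-type of $X$. Following Vogan, attach to $\mu$ a $\theta$-stable parabolic subalgebra $\frq_\mu = \frl_\mu \oplus \fru_\mu$: pick $H \in i\frt_{f,0}$ in the same closed $W(\frk,\frt_f)$-chamber as $\lambda_a(\mu) + 2\rho_c$, and let $\fru_\mu$ be the sum of root spaces on which $\mathrm{ad}(H)$ is strictly positive. The lowest-$K$-type classification then produces an irreducible $(\frl_\mu, L_\mu \cap K)$-module $Z$ with infinitesimal character $\lambda_L = \Lambda - \rho(\fru_\mu)$ such that $X$ is the unique irreducible constituent of $\caL_{S_\mu}(Z)$ containing $\mu$ in its bottom layer, where $S_\mu = \dim(\fru_\mu \cap \frk)$.

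The first major step is to verify that the strong regularity hypothesis on $\Lambda$ forces $Z$ to lie in the good range with respect to $\frq_\mu$. For any $\alpha \in \Delta(\fru_\mu, \frh_f)$, strong regularity gives
\[
\langle \lambda_L + \rho(\fru_\mu),\, \alpha^\vee \rangle = \langle \Lambda,\, \alpha^\vee \rangle \geq \langle \rho,\, \alpha^\vee \rangle > 0,
\]
which is condition \eqref{def-good}. By Theorem \ref{thm-Vogan-coho-ind}(v), unitarity of $X$ is equivalent to unitarity of $Z$, so $Z$ is unitary. Using $\rho = \rho^L + \rho(\fru_\mu)$, a short calculation yields $\lambda_L - \rho^L = \Lambda - \rho$, so $\lambda_L$ remains real and strongly regular relative to $\Delta^+(\frl_\mu, \frh_f)$. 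If $\frq_\mu$ is a proper parabolic, the inductive hypothesis applied to $L_\mu$ (whose semisimple rank is strictly smaller) yields that $Z$ is an $A_{\frq'}(\lambda')$-module of $L_\mu$ in the good range. Composing cohomological induction through the tower $\frq' \subseteq \frl_\mu$ and $\frq_\mu \subseteq \frg$ then presents $X$ as an $A_{\frq}(\lambda)$-module of $G$ in the good range.

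The base case, and the main obstacle, is $\frq_\mu = \frg$, so that $\lambda_a(\mu) + 2\rho_c$ is already strictly dominant and $\fru_\mu = 0$. Here $X = Z$ itself, and one must show that strong regularity of $\Lambda$ forces $X$ to be one-dimensional, hence a unitary character and trivially an $A_{\frg}(\lambda)$-module. The key tool is Parthasarathy's inequality \eqref{Dirac-inequality}: from $\|\mu\|_{\rm spin} \geq \|\Lambda\|$ together with $\|\Lambda\| \geq \|\rho\|$ (which follows from strong regularity, since $\Lambda - \rho$ is dominant and $\langle\Lambda-\rho,\rho\rangle\geq 0$), combined with $\|\rho\|^2 = \|\rho_c\|^2 + \|\rho_n\|^2$ and the spin-norm definition \eqref{spin-norm}, one derives severe pointwise constraints on $\mu$. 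The condition $\fru_\mu = 0$ says $\mu$ is maximally small in Vogan's sense, and a signature analysis of the invariant Hermitian form on the standard module attached to $\mu$ then eliminates every candidate except a one-dimensional $K$-type, forcing $X$ to be a character. This last signature computation is the technical heart of the argument, and is precisely where the full strength of \cite{Sa} is needed; once it is in hand, the inductive scheme above completes the proof.
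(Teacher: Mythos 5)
This theorem is not proved in the paper at all: it is quoted verbatim from Salamanca-Riba \cite{Sa}, so the only meaningful comparison is with her argument, whose overall shape (lowest $K$-type, the attached $\theta$-stable parabolic, reduction to the Levi, Parthasarathy's inequality) your sketch does echo. But as a proof your proposal has two genuine gaps. First, the inductive step is circular at the key point: you write $\langle \lambda_L+\rho(\fru_\mu),\alpha^\vee\rangle=\langle\Lambda,\alpha^\vee\rangle$ for $\alpha\in\Delta(\fru_\mu)$, which silently identifies the representative $\lambda_L+\rho(\fru_\mu)$ produced by the lowest-$K$-type construction with the \emph{dominant} representative $\Lambda$ of the infinitesimal character. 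These agree only up to $W(\frg,\frh_f)$, and the statement that this particular representative is dominant on $\Delta(\fru_\mu)$ is essentially the good-range assertion you are trying to establish; strong regularity of the dominant representative does not give it for free. Indeed, irreducible non-unitary modules (e.g.\ Langlands quotients with large continuous parameter) can have strongly regular real infinitesimal character while their lowest-$K$-type data are badly aligned with $\Lambda$, so unitarity must already enter at this step---in \cite{Sa} it does, through Dirac-inequality estimates on the lowest $K$-type---before Theorem \ref{thm-Vogan-coho-ind}(v) can be invoked.

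Second, the base case $\frq_\mu=\frg$, which you correctly identify as the heart of the matter, is not argued: ``severe pointwise constraints'' and ``a signature analysis'' are placeholders, and you then state that this is ``precisely where the full strength of \cite{Sa} is needed.'' Since the statement being proved \emph{is} the main theorem of \cite{Sa}, this is an appeal to the result itself rather than a proof. So the proposal is a reasonable reconstruction of the strategy of \cite{Sa}, but it does not constitute a proof: the alignment/good-range step and the small-lowest-$K$-type case both require the quantitative unitarity input (Parthasarathy's inequality and the accompanying signature analysis) that is precisely the content of Salamanca-Riba's paper.
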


Note that \cite[Theorem B]{DH} gives a formula of Dirac cohomology for weakly good cohomologically induced modules.


\subsection{Cohomological induction in the software \texttt{atlas}}\label{sec-coho-atlas}
Let us warm up with some basic setting of the software \texttt{atlas} \cite{At}. Let $H(\bbC)$ be a \emph{maximal torus} of $G(\bbC)$. That is, $H(\bbC)$ is a maximal connected abelian subgroup of $G(\bbC)$ consisting of diagonalizable matrices.  Its \emph{character lattice} is the group of algebraic homomorphisms
$$
X^*:={\rm Hom}_{\rm alg} (H(\bbC), \bbC^{\times}).
$$
Choose a Borel subgroup $B(\bbC)\supset H(\bbC)$.
In \texttt{atlas}, an irreducible $(\frg, K)$-module $\pi$ is parameterized by a \emph{final} parameter $p=(x, \lambda, \nu)$ via the Langlands classification \cite{ALTV}, where $x$ is a $K(\bbC)$-orbit of the Borel variety $G(\bbC)/B(\bbC)$, $\lambda \in X^*+\rho$ and $\nu \in (X^*)^{-\theta}\otimes_{\bbZ}\bbC$. In such a case, the infinitesimal character of $\pi$ is
\begin{equation}\label{inf-char}
\frac{1}{2}(1+\theta)\lambda +\nu \in\frh^*,
\end{equation}
where $\frh$ is the Lie algebra of $H(\bbC)$.
Note that the Cartan involution $\theta$ now becomes $\theta_x$---the involution of $x$, which is given by the command \texttt{involution(x)} in \texttt{atlas}.

Taken from Paul's lecture \cite{Paul}, the following result expresses a theorem of Vogan \cite{Vog84} in the language of \texttt{atlas}.

\begin{thm}\label{thm-Vogan} \emph{(Vogan \cite{Vog84})}
 Let $p=(x, \lambda, \nu)$ be the \texttt{atlas} parameter of an irreducible $(\frg, K)$-module $X$.
Let $S$ be the support of $x$, and $\frq(x)$ be the $\theta$-stable parabolic subalgebra given by the pair $(S, x)$, with Levi factor $L$.
Then  $X$ is cohomologically induced, in the weakly good range,
from an irreducible $(\frl, L\cap K)$-module $X_L$ with parameter $p_L=(y, \lambda-\rho(\fru), \nu)$, where $y$ is the KGB element of $L$ corresponding to the KGB element $x$ of $G$.
\end{thm}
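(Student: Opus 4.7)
The plan is to reduce the statement to Vogan's classical result on cohomological induction from a $\theta$-stable parabolic built from a KGB element, and then verify that the atlas parameter of the inducing module has the asserted form. First, I would pin down the geometric construction underlying $\frq(x)$. Given the KGB element $x$ with involution $\theta_x$ and its support $S$ (a subset of the simple roots of $\Delta(\frg,\frh)$ on which $x$ acts in a distinguished combinatorial way), one produces a canonical element $\zeta\in i\frt_{f,0}$ whose centralizer in $\frg$ is $\frl$ and for which $\fru=\sum_{\langle\alpha,\zeta\rangle>0}\frg_\alpha$. The fact that $S$ consists precisely of the simple roots on which $\zeta$ vanishes shows that $\frq(x)=\frl\oplus\fru$ is $\theta$-stable, and that $L$ has the KGB element $y$ inherited from $x$.

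Next, I would identify the inducing module. Define $X_L$ to be the irreducible $(\frl,L\cap K)$-module with atlas parameter $p_L=(y,\lambda-\rho(\fru),\nu)$. Because $\frq$ is $\theta$-stable, $\rho(\fru)$ is fixed by $\theta$, so the infinitesimal character computation \eqref{inf-char} for $p_L$ yields
\[
\lambda_L=\tfrac{1}{2}(1+\theta)(\lambda-\rho(\fru))+\nu=\tfrac{1}{2}(1+\theta)\lambda+\nu-\rho(\fru),
\]
and hence $\lambda_L+\rho(\fru)$ agrees with the infinitesimal character of $X$, as required by Theorem \ref{thm-Vogan-coho-ind}(iii).

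Third, I would verify the weakly good hypothesis \eqref{def-weakly-good}. For $\alpha\in\Delta(\fru,\frh_f)$ one has $\langle\lambda_L+\rho(\fru),\alpha^\vee\rangle=\langle\tfrac12(1+\theta)\lambda+\nu,\alpha^\vee\rangle$, and because $\fru$ is built from the roots on which $\zeta$ is strictly positive, the Langlands data $(\lambda,\nu)$ satisfies the relevant non-negativity on $\alpha^\vee$ essentially by definition of the support $S$ and of finality of $p$. This places $X_L$ in the weakly good range, so $\caL_S(X_L)$ is either zero or irreducible with the correct infinitesimal character.

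Finally, I would invoke uniqueness to conclude $X\cong\caL_S(X_L)$. By Theorem \ref{thm-Vogan-coho-ind}(iii), $\caL_S(X_L)$ is either zero or an irreducible $(\frg,K)$-module. One then checks that the atlas parameter of $\caL_S(X_L)$, computed by tracking the KGB element $y\mapsto x$ and the effect of adding $\rho(\fru)$ to the $\lambda$-coordinate under cohomological induction, coincides with $p=(x,\lambda,\nu)$; this forces $\caL_S(X_L)\cong X$. The main obstacle is the bookkeeping in this last step: matching the atlas conventions (final parameter normalizations, the precise lift $y\mapsto x$ between KGB sets, and the integrality conventions hidden in $X^*+\rho$) with the classical $\theta$-stable parabolic induction data requires careful compatibility checks, which are exactly what Paul's lecture \cite{Paul} carries out in detail.
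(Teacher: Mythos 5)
The paper offers no proof of this statement: it is quoted as a theorem of Vogan \cite{Vog84} in the formulation of Paul's \texttt{atlas} lecture \cite{Paul}, so there is no internal argument to compare yours against. On its own terms, your outline follows the standard route (construct $\frq(x)$ from $({\rm supp}(x),x)$, posit the inducing parameter $p_L=(y,\lambda-\rho(\fru),\nu)$, compute the infinitesimal character shift using $\theta\rho(\fru)=\rho(\fru)$, verify weak goodness, and identify $\caL_S(X_L)$ with $X$), and the infinitesimal character computation is correct.

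However, the two load-bearing steps are not actually carried out. First, your justification of weak goodness ``essentially by definition of the support $S$ and of finality of $p$'' is off target: finality is irrelevant here. The correct reason is the \texttt{atlas} normalization that the infinitesimal character $\gamma=\frac{1}{2}(1+\theta)\lambda+\nu$ of $p$ is weakly dominant, combined with $\Delta(\fru,\frh_f)\subseteq\Delta^{+}(\frg,\frh_f)$ for the parabolic attached to $({\rm supp}(x),x)$; then $\langle\lambda_L+\rho(\fru),\alpha^{\vee}\rangle=\langle\gamma,\alpha^{\vee}\rangle\geq 0$ for all $\alpha\in\Delta(\fru,\frh_f)$ is immediate, and without that normalization the inequality can fail. (Similarly, the existence of a $\theta_x$-fixed weight $\zeta$ cutting out $\frq(x)$ is exactly the defining property of the support and deserves a reference or argument, not just an assertion.) Second, and more seriously, your final step assumes what is to be proved: saying that one ``checks that the atlas parameter of $\caL_S(X_L)$ coincides with $p$'' by ``tracking the KGB element and the effect of adding $\rho(\fru)$'' is precisely the content of Vogan's theorem in this language --- that weakly good cohomological induction sends the standard module with parameter $(y,\lambda-\rho(\fru),\nu)$ for $L$ to the one with parameter $(x,\lambda,\nu)$ for $G$ (via the bottom-layer/standard-module correspondence), so that the irreducible quotients match. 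Deferring this to \cite{Paul} makes your proposal a reduction to the cited literature rather than an independent proof; that is the same status the theorem has in the paper, but it should be stated as a citation, not presented as a verification.
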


Note that the \emph{support} of a KGB element $x$ (that is, a $K(\bbC)$-orbit of the Borel variety $G(\bbC)/B(\bbC)$) is given by the command \texttt{support(x)} in \texttt{atlas}. The above theorem always utilizes the minimum $\theta$-stable parabolic subgroup to do cohomological induction.  Usually, one can find other bigger $\theta$-stable parabolic subgroups to realize $X$ via cohomological induction, then we may move the inducing module  from the weakly good range to the good range.

\begin{example}\label{exam-coho-ind}
Let us illustrate Theorem \ref{thm-Vogan} via the representation in the seventh row of Table \ref{table-FII-string-part}. (Due to the different labeling of simple roots of \texttt{atlas}, we should reverse the coordinates of $\lambda$ and $\nu$ there.)  For simplicity, certain outputs of \texttt{atlas} have been omitted here.
\begin{verbatim}
G:F4_B4
set p=parameter(KGB(G,6), [1,1,1,0], [-3/2,3/2,0,-3/2])
set (P,pL)=reduce_good_range(p)
P
Value: ([1,2],KGB element #6)
rho_u(P)
Value: [ 5, 0, 0, 6 ]/2
pL
Value: final parameter(x=3,lambda=[-3,2,2,-6]/2,nu=[-3,3,0,-3]/2)
theta_induce_irreducible(pL, G)=p
Value: true
goodness(pL,G)
Value: "Weakly good"
\end{verbatim}
The \texttt{pL} above is the parameter of the inducing module. The last output says that it is weakly good.

Now let us enlarge the above $\theta$-stable parabolic subgroup \texttt{P} to be the following \texttt{Q}, then we can still get the original representation \texttt{p}, while the inducing module will be shifted to the good range.
\begin{verbatim}
set Q=theta_stable_parabolics(G)[30]
Q
Value: ([1,2,3],KGB element #9)
set L=Levi(Q)
L
Value: connected real group with Lie algebra 'sp(2,1).u(1)'
rho_u(Q)
Value: [ 4, 0, 0, 0 ]/1
set qL=parameter(KGB(L,6),[1,1,1,0]-rho_u(Q), [-3/2,3/2,0,-3/2])
qL
Value: final parameter(x=6,lambda=[-3,1,1,0]/1,nu=[-3,3,0,-3]/2)
theta_induce_irreducible(qL,G)=p
Value: true
goodness(qL,G)
Value: "Good"
\end{verbatim}
Therefore, the representation \texttt{p} is actually \emph{not} a scattered member of $\widehat{\rm FII}^d$.
\hfill\qed
\end{example}

\section{Computing the scattered part of $\widehat{G}^d$}\label{sec-algorithm}
This section aims to recall the algorithm for computing the scattered part of $\widehat{G}^d$ from \cite{D17}. Note that a member of $\widehat{G}^d$ belongs to the scattered part if it is \emph{not} cohomologically induced from any \textbf{good} module of any \textbf{proper} $\theta$-stable parabolic subgroup of $G$.

Based on the proof of \cite[Theorem A]{D17}, we proceed as follows:
\begin{itemize}
\item[(a)] Enumerate all the dominant real infinitesimal characters $\Lambda$ such that
$$
\|\Lambda\|^2<\left(\min_{\alpha\in\Delta^+(\frg, \frh)}\frac{4\|\rho\|^2}{\|\alpha\|^2}+1\right)\|\rho\|^2,
$$
and such that $\Lambda$ is conjugate to $\delta+\rho_c$ for a certain $K$-type $\delta$.
\item[(b)] For each $\Lambda$  in step (a), enumerate all the irreducible representations of $G$ with infinitesimal character $\Lambda$ via the command
\begin{verbatim}
set all=all_parameters_gamma(Lambda)
\end{verbatim}
Further select the unitary ones out of the  above modules  via the command
\begin{verbatim}
for p in all do if is_unitary(p) then prints(p) fi od
\end{verbatim}
\item[(c)] For each module $\texttt{p}$ surviving in step (b), check whether its Dirac cohomology vanishes or not. More precisely, given the infinitesimal character $\Lambda$, one enumerates all the $K$-types \texttt{CanK} that can possibly contribute to Dirac cohomology by Theorem \ref{thm-HP}. Calculate the maximum \texttt{atlas} height $\texttt{ht}$ of the $K$-types in \texttt{CanK}. Then look at the $K$-types of $\texttt{p}$ up to this height via the command
\begin{verbatim}
print_branch_irr_long(p, KGB(G,0), ht)
\end{verbatim}
The Dirac cohomology of $\texttt{p}$ is non-vanishing   if and only if at least one $K$-type in \texttt{CanK} occurs in the output of the above command.
\end{itemize}

If the group $G$ is connected and centerless, then any one-dimensional unitary character of $G$ must be trivial. Therefore, whenever this is the case,  Theorem \ref{thm-SR} allows us to focus on those $\Lambda$ which are \emph{not} strongly regular in step (a) to find the non-trivial scattered members of $\widehat{G}^d$. This will significantly reduce the workload.

Another remark is that we can use Parthasarathy's Dirac operator inequality to detect non-unitarity effectively in step (b). More precisely, as guaranteed by Theorem C of \cite{D17}, for certain groups we can use the distribution of spin norm along Vogan pencils starting from one of the lowest $K$-types to rule out many non-unitary representations. See Example \ref{exam-EI-Dirac-inequality}.

Carrying out these steps will give us finitely many members of $\widehat{G}^d$, among which we can find all the scattered members due to Theorem A of \cite{D17}. For instance, a representation must be scattered if its KGB element has a full support. There may also be some scattered members whose KGB elements are not fully supported. In practice, we find it is more convenient to view them as the starting points of strings instead of singling them out.  See Example \ref{exam-G2-string-starting-point}.

Note that the scattered members of $\widehat{L}^d$  are embedded  in the representations produced by steps (a)---(c) as well, where $L$ runs over the Levi factors of all the proper $\theta$-stable parabolic subgroups of $G$. Thus we obtain some starting points of all the strings of $\widehat{G}^d$ at the same time. Now it remains to figure out the strings. A typical string will be considered in Example \ref{exam-string}. Other strings are similar. Then we will pin down $\widehat{G}^d$ completely.

To sum up, like the case of complex Lie groups \cite{DD, D17E6}, we will organize  the set $\widehat{G}^d$ neatly  according to $|{\rm supp}(x)|$---the cardinality of the support of \texttt{x}. To be concise, we will refer to the members of $\widehat{G}^d$ whose KGB elements have full support as the \textbf{FS-scattered part} of $\widehat{G}^d$ from now on.

In subsequent sections, we will carry out this algorithm for several simple real exceptional Lie groups. Some \texttt{Mathematica} files are built to facilitate the calculations, and the codes there are carefully explained. The one for the group ${\rm EI}=E_{6(6)}$ is available via the link
\begin{verbatim}
https://www.researchgate.net/publication/327741868_EI-ScatteredPart
\end{verbatim}

\section{The set $\widehat{\rm FII}^d$}\label{sec-FII}

This section aims to classify the Dirac series for ${\rm FII}=F_{4(-20)}$, which is realized in $\texttt{atlas}$ via the command
\texttt{G:F4\_B4}. This  equal rank group is  centerless, connected and simply connected.

We adopt the simple roots of $\Delta^+(\frg, \frt_f)$ and  $\Delta^+(\frk, \frt_f)$ as in Knapp \cite[Appendix C]{Kn}.
In particular, its Vogan diagram is presented in Fig.~\ref{Fig-FII-Vogan}, where $\alpha_1, \alpha_2$ are short, while $\alpha_3, \alpha_4$ are long.
Let $\{\xi_1, \xi_2, \xi_3, \xi_4\}$ be the fundamental weights corresponding to $\Delta^+(\frg, \frt_f)$. The simple roots for $\Delta^+(\frk, \frt_f)$ are
$$
\gamma_1=2\alpha_1+2\alpha_2+\alpha_3, \quad \gamma_2=\alpha_4, \quad \gamma_3=\alpha_3, \quad \gamma_4=\alpha_2.
$$
Let $\varpi_1, \dots, \varpi_4$ be the fundamental weights corresponding to $\Delta^+(\frk, \frt_f)$.
We will express the $\texttt{atlas}$ parameters $\lambda$ and $\nu$ in terms of $\xi_1, \dots, \xi_4$, and express highest weights of $K$-types in terms of $\varpi_1, \dots, \varpi_4$. For instance,  the spin-lowest $K$-type in the first row of Table \ref{table-FII-string-part} is the one with highest weight
$$
[b+c, a-1, b-1, c+d]:=(b+c)\varpi_1 + (a-1)\varpi_2 + (b-1)\varpi_3+ (c+d) \varpi_4.
$$

\begin{figure}[H]
\centering
\scalebox{0.6}{\includegraphics{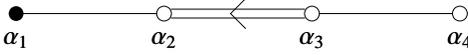}}
\caption{The Vogan diagram for FII}
\label{Fig-FII-Vogan}
\end{figure}

 Note that \texttt{atlas} labels the simple roots of $\Delta^+(\frg, \frt_f)$ in the way \textbf{opposite to} that of Fig.~\ref{Fig-FII-Vogan}. Thus whenever we put the parameters $\lambda$ and $\nu$ in Tables \ref{table-FII-scattered-part} and \ref{table-FII-string-part} into $\texttt{atlas}$, we should \textbf{reverse} the order of their coordinates.

\begin{example}\label{exam-FII-scattered-part}
Let us illustrate the algorithm in Section \ref{sec-algorithm} for FII.
\begin{itemize}
\item[$\bullet$]
Step (a) gives us $311513$ candidates for $\Lambda$, among which $38090$ are not strongly regular. By Theorem \ref{thm-SR}, it suffices to focus on the latter ones.

\item[$\bullet$] FII has $15$ KGB elements in total. The following ones are fully supported
$$
\#x=10, 11, 12, 13, 14.
$$

\item[$\bullet$] Now say fix $\#x=10$. Then only one representation survives after carrying out steps (b) and (c). This gives the first row of Table \ref{table-FII-scattered-part}.
\item[$\bullet$] All other fully supported KGB elements produce no non-trivial FS-scattered members of $\widehat{\rm FII}^d$.
\end{itemize}
To sum up, the FS-scattered members of $\widehat{\rm FII}^d$ are exhausted in Table \ref{table-FII-scattered-part}, where in the second row sits the trivial representation.
See Example \ref{exam-G-scattered-via-string} for the meaning of the column ``string limit".
\hfill\qed
\end{example}

The strings of $\widehat{\rm FII}^d$ are given in Table \ref{table-FII-string-part}, where $a, b, c, d$ are members of $\bbN$ such that
the infinitesimal character
$$
\Lambda=[d, c, b, a]
$$
and that
\begin{equation}\label{FII-common-requirement}
a\geq 1, \quad b\geq 1, \quad c+d\geq 1.
\end{equation}
In a few cases, there are stronger requirements than \eqref{FII-common-requirement} for some coordinates. They will be put within the column ``spin LKTs".

\begin{example}\label{exam-string}
Let us show that each member of the string with $\#x=3$ in Table \ref{table-FII-string-part} is a Dirac series. Note that this KGB element of $\texttt{F4\_B4}$ has support  $[3]$ in \texttt{atlas}.

In view of \eqref{FII-common-requirement}, this string has a unique starting module
$$
\pi_{0, 1, 1}:=(x, [1, 0, 1, 1], [1, -{\textstyle\frac{1}{2}}, 0, 0]).
$$
We compute directly that it is a Dirac series.
It has infinitesimal character $\Lambda_{0, 1, 1}:=[1, 0, 1, 1]=\xi_1+\xi_3+\xi_4$. (Note again that the coordinates of $\lambda$, $\nu$ and $\Lambda$ should be reversed for \texttt{atlas}.) The module $\pi_{0, 1, 1}$ is cohomologically induced from an irreducible unitary module $\pi_{0, 1, 1}^{L}$ (in the way of Theorem \ref{thm-Vogan}) which is weakly good. Fix integers $a\geq 1, b\geq 1, c\geq 0 $. Put $\Lambda_{c, b, a}:=[1, c, b, a]=\xi_1+c\xi_2+b\xi_3+a\xi_4$ and
$$
\pi_{c, b, a}:=(x, [1, c, b, a], [1, -{\textstyle\frac{1}{2}}, 0, 0]).
$$
Let $\zeta_{c, b-1, a-1}$ be the unitary character of $L$ with differential $c\xi_2+(b-1)\xi_3+(a-1)\xi_4$. By our choices of $a, b, c$, we have that
\begin{equation}\label{keeping-weakly-good}
\Lambda_{c, b, a}-\Lambda_{0, 1, 1}=c\xi_2+(b-1)\xi_3+(a-1)\xi_4
\end{equation}
is dominant for $\Delta^+(\frg, \frt_f)$. Therefore, by Theorem 7.237 of \cite{KV},
\begin{equation}\label{Dirac-translation-functor}
\psi_{\Lambda_{c, b, a}}^{\Lambda_{0, 1, 1}}\big(\caL_S(\pi_{0, 1, 1}^L\otimes \zeta_{c, b-1, a-1})\big)=\caL_S\big(\pi_{0, 1, 1}^L\big)=\pi_{0, 1, 1}.
\end{equation}
Here $\psi_{\Lambda_{c, b, a}}^{\Lambda_{0, 1, 1}}$ is the translation functor, and $S:=\dim (\fru\cap\frk)$.
In particular, it says that $\caL_S(\pi_{0, 1, 1}^L\otimes \zeta_{c, b-1, a-1})$ is non-zero.  Note that by \eqref{keeping-weakly-good},
the inducing module $\pi_{0, 1, 1}^L\otimes \zeta_{c, b-1, a-1}$ is weakly good. Therefore, the module $\caL_S(\pi_{0, 1, 1}^L\otimes \zeta_{c, b-1, a-1})$ is irreducible and unitary. This gives us the representation $\pi_{c, b, a}$. Since $\pi_{0, 1, 1}$ has nonzero Dirac cohomology, and that
\begin{equation}\label{Dirac-unitary-character}
H_D\big(\pi_{0, 1, 1}^L\otimes \zeta_{c, b-1, a-1}\big)
=H_D\big(\pi_{0, 1, 1}^L\big)\otimes \zeta_{c, b-1, a-1},
\end{equation}
it follows from Theorem B of \cite{DH} that $\pi_{c, b, a}$ is a Dirac series.
\hfill\qed
\end{example}
\begin{rmk}
The above argument works for all the strings in this paper.
\end{rmk}

One checks directly that each starting point (hence each member) of every string in Table \ref{table-FII-string-part} is either in the good range or can be shifted to the good range by enlarging the $\theta$-stable parabolic subgroups (see Example \ref{exam-coho-ind} for one instance). Therefore, Table \ref{table-FII-scattered-part} actually exhausts the scattered part of $\widehat{\rm FII}^d$.

\begin{cor}
Let $G$ be {\rm FII}. Then all the $K$-types whose spin norm is equal to their lambda norm are exactly
\begin{itemize}
\item[$\bullet$]
$[b+c, a-1, b-1, c+d]$, where $a, b\geq 1$, $c, d\geq 0$ and $c+d\geq 1$;
\item[$\bullet$]
$[b-1, a-1, b+c, d-1]$, where $a, b, d\geq 1$ and $c\geq 0$;
\item[$\bullet$]
$[b+c+d+1, a-1, b-1, c-1]$, where $a, b, c\geq 1$ and $d\geq 0$.
\end{itemize}
\end{cor}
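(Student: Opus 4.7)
The plan is to combine the classification of $\widehat{\rm FII}^d$ given by Theorem A with the characterization provided by Theorem 1.2 of \cite{DD16}. Via that theorem, a $K$-type $\mu$ satisfies $\|\mu\|_{\rm spin}=\|\mu\|_{\rm lambda}$ precisely when $\mu$ arises as a spin-lowest $K$-type of some irreducible unitary $(\frg,K)$-module with non-vanishing Dirac cohomology. Thus the problem reduces to enumerating all spin-lowest $K$-types occurring among the members of $\widehat{\rm FII}^d$, and grouping them into normal forms.

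The concrete steps I would take are the following. First, apply the \cite{DD16} characterization to convert the claim into: a $K$-type satisfies $\|\mu\|_{\rm spin}=\|\mu\|_{\rm lambda}$ if and only if it is a spin-lowest $K$-type of some $\pi\in\widehat{\rm FII}^d$. Second, invoke Theorem A, which asserts that every member of $\widehat{\rm FII}^d$ is enumerated in Table \ref{table-FII-scattered-part} or Table \ref{table-FII-string-part}, each with a \emph{unique} spin-lowest $K$-type of multiplicity one. Third, read off these spin-lowest $K$-types from the ``spin LKT'' column of the string table in the basis $\varpi_1,\dots,\varpi_4$, and note that the two FS-scattered representations in Table \ref{table-FII-scattered-part} arise as ``string limits'' (compare Example \ref{exam-G-scattered-via-string}) of members of the string part and therefore do not enlarge the list of highest weights once $a,b,c,d$ are allowed to take their extreme values. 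Fourth, regroup the resulting parametric expressions into three disjoint normal forms by substitutions of the dummy parameters $a,b,c,d$, deducing the inequalities $a,b\geq 1$, $c,d\geq 0$, $c+d\geq 1$ for the first family; $a,b,d\geq 1$, $c\geq 0$ for the second; and $a,b,c\geq 1$, $d\geq 0$ for the third, as dictated by \eqref{FII-common-requirement} together with the extra positivity constraints recorded in Table \ref{table-FII-string-part}.

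The main obstacle is bookkeeping rather than anything conceptual: ten strings must be shown to fold, after judicious reparametrization, into exactly three families with no omissions and no redundancy, and the two scattered rows must be checked to lie inside these families at the boundary of the parameter ranges. I would carry out this verification by writing out the spin-lowest $K$-type of every string in a common coordinate system, matching each against one of the three template weights, and comparing the implied ranges of $a,b,c,d$ against the constraints in the corollary. Provided this combinatorial audit is clean, the corollary then follows immediately from the characterization of \cite{DD16} and Theorem A.
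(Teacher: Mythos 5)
Your reduction rests on a mis-statement of the key input. Theorem 1.2 of \cite{DD16} does \emph{not} say that $\|\mu\|_{\rm spin}=\|\mu\|_{\rm lambda}$ holds precisely for the spin-lowest $K$-types of arbitrary members of $\widehat{G}^d$; it characterizes such $K$-types as the (spin-)lowest $K$-types of \emph{tempered} representations with non-zero Dirac cohomology. That is exactly how the paper argues: the tempered members of $\widehat{\rm FII}^d$ are precisely the strings with $\#x=0,1,2$ in Table \ref{table-FII-string-part} (those with $\nu=0$), and their lowest $K$-types are the three families in the corollary; the remaining seven strings and the two FS-scattered rows play no role.

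The equivalence you assert is in fact false, and the corollary itself refutes it. The trivial representation (last row of Table \ref{table-FII-scattered-part}) lies in $\widehat{\rm FII}^d$ and has unique spin-lowest $K$-type $[0,0,0,0]$, which belongs to none of the three families: its spin norm equals $\|\rho\|$ (the norm of the infinitesimal character), while its lambda norm is strictly smaller. Likewise the string with $\#x=3$ has spin-lowest $K$-types $[b+c+1,a-1,b-1,c]$, and a short check against the three templates shows these never occur in the list (each attempted matching forces a negative parameter). So if you carried out the proposed ``combinatorial audit'' over all ten strings plus the scattered part, the weights would not fold into the three families; insisting that they must would yield a strictly larger, incorrect answer. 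The fix is to use the correct form of \cite{DD16}, Theorem 1.2, restrict attention to the tempered strings $\#x=0,1,2$, and read off their lowest $K$-types together with the constraints \eqref{FII-common-requirement} and the extra conditions $d\geq 1$, $c\geq 1$ recorded in Table \ref{table-FII-string-part}.
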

\begin{proof}
Note that the strings with $\#x=0, 1, 2$ give precisely all the irreducible tempered representations of FII with non-zero Dirac cohomology. The result follows from Theorem 1.2 of \cite{DD16} and Table \ref{table-FII-string-part}.
\end{proof}

\begin{table}[H]
\centering
\caption{FS-scattered members of $\widehat{\rm FII}^{\mathrm{d}}$}
\begin{tabular}{l|c|c|c|c|c|r}
$\# x$ &   $\lambda$  & $\nu$ & spin LKTs & mult & u-small & string limit\\
\hline
$10$ & $[2, -1, 1, 2]$ & $[\frac{5}{2}, -\frac{5}{2}, 0, \frac{5}{2}]$ & $[0, 0, 1, 0]$ & $1$ & Yes & $\#8$, $d=-1$\\
$14$ & $[1, 1, 1, 1]$ & $[\frac{11}{2}, 0, 0, 0]$ & $[0, 0, 0, 0]$ & $1$ & Yes
\end{tabular}
\label{table-FII-scattered-part}
\end{table}

\begin{table}[H]
\centering
\caption{Strings of $\widehat{\rm FII}^{\mathrm{d}}$}
\begin{tabular}{l|c|c|c|c|r}
$\# x$ &   $\lambda$   & $\nu$ &spin LKTs & mult  \\
\hline
$0$ & $[d, c, b, a]$ & $[0, 0, 0, 0]$ & $[b+c, a-1, b-1, c+d]$ & $1$ \\
$1$ & $[d, c, b, a]$ & $[0, 0, 0, 0]$ & $[b-1, a-1, b+c, d-1]$, $d\geq 1$ & $1$  \\
$2$ & $[d, c, b, a]$ & $[0, 0, 0, 0]$ & $[b+c+d+1, a-1, b-1, c-1]$, $c\geq 1$ & $1$  \\
$3$ & $[1, c, b, a]$ & $[1, -\frac{1}{2}, 0, 0]$ & $[b+c+1, a-1, b-1, c]$ & $1$  \\
$4$ & $[d, 1, b, a]$ & $[-\frac{1}{2}, 1, -\frac{1}{2}, 0]$ & $[b, a-1, b, d]$ & $1$  \\
$5$ & $[1, 1, b, a]$ & $[1, 1, -1, 0]$  &  $[b+1, a-1, b, 0]$ & $1$  \\
$6$ & $[d, 1, 1, a]$ & $[-\frac{3}{2}, 0, \frac{3}{2}, -\frac{3}{2}]$ & $[0, a, 0, d+1]$ & $1$  \\
$7$ & $[2, -1, 2, a-1]$ & $[\frac{3}{2}, -\frac{3}{2}, \frac{3}{2}, -\frac{3}{2}]$ & $[1, a-1, 0, 1]$ & $1$ \\
$8$ & $[d, 1, 1, 1]$ & $[-\frac{5}{2}, 0, 0, \frac{5}{2}]$ & $[0, 0, 0, d+2]$ & $1$ \\
$9$ & $[1, 1, 1, a]$ & $[0, \frac{5}{2}, 0, -\frac{5}{2}]$ & $[0, a+1, 0, 0]$ & $1$
\end{tabular}
\label{table-FII-string-part}
\end{table}

\section{The set $\widehat{\rm EIV}^d$}

This section aims to classify the Dirac series for ${\rm EIV}=E_{6(-26)}$, which is realized in $\texttt{atlas}$ via the command \texttt{G:E6\_F4}.
This group is centerless, connected and simply connected. It is not equal rank. Indeed, $\dim \frt_f=4$ and $\dim \fra_f=2$.

We adopt the simple roots of $\Delta^+(\frg, \frh_f)$ and  $\Delta^+(\frk, \frt_f)$ as in Knapp \cite[Appendix C]{Kn}.
In particular, its Vogan diagram is presented in Fig.~\ref{Fig-EIV-Vogan}.
Let $\{\xi_1, \dots, \xi_6\}$ be the fundamental weights corresponding to $\Delta^+(\frg, \frt_f)$. The simple roots for $\Delta^+(\frk, \frt_f)$ are
$$
\gamma_4:=\beta_2, \quad \gamma_3:=\beta_4, \quad  \gamma_2:=\frac{1}{2}(\beta_3+\beta_5), \quad \gamma_1:=\frac{1}{2}(\beta_1+\beta_6).
$$
Here $\gamma_1$ and $\gamma_2$ are short, while $\gamma_3$ and $\gamma_4$ are long.
Let $\varpi_1, \dots, \varpi_4$ be the fundamental weights corresponding to $\Delta^+(\frk, \frt_f)$. We will express the highest weights of $K$-types in terms of $\varpi_1, \dots, \varpi_4$. Similarly, the \texttt{atlas} parameters $\lambda$ and $\nu$ will be expressed in terms of $\xi_1, \dots, \xi_6$. Note that \texttt{atlas} labels the simple roots of $\Delta^+(\frg, \frh_f)$ in the same way as that of Fig.~\ref{Fig-EIV-Vogan}.

\begin{figure}[H]
\centering
\scalebox{0.5}{\includegraphics{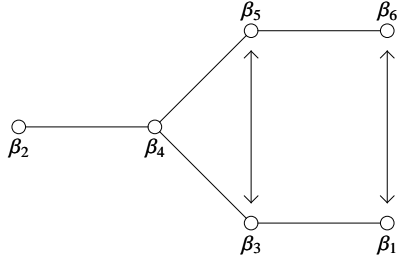}}
\caption{The Vogan diagram for EIV}
\label{Fig-EIV-Vogan}
\end{figure}

\begin{example}\label{exam-EIV-scattered-part}
Let us illustrate the algorithm in Section \ref{sec-algorithm} for EIV.
\begin{itemize}
\item[$\bullet$]
Step (a) gives us $1147419$ candidates for $\Lambda$, among which $105003$ are not strongly regular. By Theorem \ref{thm-SR}, it suffices to focus on the latter ones.

\item[$\bullet$] EIV has $45$ KGB elements in total. The following ones are fully supported
$$
\#x=12, 14, 16, 17, 19, 20, 21, \quad  23\leq \#x\leq 44.
$$

\item[$\bullet$] Now say fix $\#x=19$. Then only one representation survives after carrying out steps (b) and (c). This gives the first row of Table \ref{table-EIV-scattered-part}.
\item[$\bullet$] All other fully supported KGB elements produce no non-trivial FS-scattered members of $\widehat{\rm EIV}^d$.
\end{itemize}
To sum up, the FS-scattered members of $\widehat{\rm EIV}^d$ are exhausted in Table \ref{table-EIV-scattered-part}, where again in the second row sits the trivial representation.
\hfill\qed
\end{example}

The string part of $\widehat{\rm EIV}^d$ is presented in Table  \ref{table-EIV-string-part}, where $a, b, c, d, e, f$ are nonnegative integers such that
the infinitesimal character
$$
\Lambda=[\frac{a+f}{2}, b, \frac{c+e}{2}, d, \frac{c+e}{2}, \frac{a+f}{2}]
$$
and that
\begin{equation}\label{EIV-common-requirement}
a-f=0 \mbox{ or } 1, \quad c- e=0 \mbox{ or } 1, \quad a+f\geq 1, \quad c+e\geq 1, \quad b\geq 1, \quad d\geq 1.
\end{equation}

One checks directly that each starting point (hence each member) of every string in Table \ref{table-EIV-string-part} is in the good range. Therefore, Table \ref{table-EIV-scattered-part} actually exhausts the scattered part of $\widehat{\rm EIV}^d$.

\begin{cor}
Let $G$ be {\rm EIV}. Then all the $K$-types whose spin norm is equal to their lambda norm are exactly $[a, b, c, d]$, where $a, b\geq 1$ and $c, d\geq 0$.
\end{cor}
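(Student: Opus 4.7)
The plan is to mirror the argument just given for FII. By Theorem 1.2 of \cite{DD16}, a $K$-type $\mu$ of $G = {\rm EIV}$ satisfies $\|\mu\|_{\rm spin} = \|\mu\|_{\rm lambda}$ if and only if $\mu$ occurs as a lowest $K$-type of some irreducible tempered $(\frg, K)$-module with non-zero Dirac cohomology. Hence the problem reduces to identifying the tempered members of $\widehat{\rm EIV}^{\mathrm d}$ and reading off their spin-lowest $K$-types from Table \ref{table-EIV-string-part}.

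First, I would isolate the tempered strings in Table \ref{table-EIV-string-part}. Because ${\rm EIV}$ is not equal rank and hence has no discrete series, these are necessarily fundamental-series strings, characterized by having Langlands parameter $\nu$ supported only on $\fra_f$. They are the direct analogues of the strings $\#x = 0, 1, 2$ in the FII case, and should sit at the top of the table.

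Next, for each such tempered string I would express the spin-lowest $K$-type in the basis $\varpi_1, \dots, \varpi_4$ of fundamental weights of $\Delta^+(\frk, \frt_f)$, giving a quadruple $[a, b, c, d]$. Running the string parameters through their admissible values under \eqref{EIV-common-requirement}, I expect the resulting quadruples to sweep out exactly the set $\{(a,b,c,d) \in \bbN^4 : a \geq 1,\, b \geq 1\}$, without repetition. Combined with the converse direction of Theorem 1.2 of \cite{DD16}, this will yield the stated characterization.

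The main obstacle is the bookkeeping in this last step. The infinitesimal character in \eqref{EIV-common-requirement} is encoded by six parameters $(a, b, c, d, e, f)$ subject to the parity-type constraints $a - f \in \{0, 1\}$ and $c - e \in \{0, 1\}$, while the corollary speaks of only four $K$-type parameters. One has to verify that, restricted to the tempered strings, the sums $a + f$ and $c + e$ each range freely over $\bbN_{\geq 1}$ and that the resulting spin-lowest $K$-types $[a, b, c, d]$ are realized exactly once as the string parameters vary. Once this matching of parameter spaces is verified by direct inspection of the relevant rows of Table \ref{table-EIV-string-part}, the corollary follows at once.
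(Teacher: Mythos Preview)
Your approach is essentially the paper's own: invoke Theorem~1.2 of \cite{DD16} and read off the spin-lowest $K$-types of the tempered strings from Table~\ref{table-EIV-string-part}. One small correction to your expectations: for EIV there is a \emph{single} tempered string (the row $\#x=0$, with $\nu=0$ and spin-lowest $K$-type $[a+f,\,c+e,\,d-1,\,b-1]$), not several as your analogy with FII's $\#x=0,1,2$ suggests, so the bookkeeping you anticipate collapses to the one-line check that $(a+f,\,c+e,\,d-1,\,b-1)$ sweeps out $\bbN_{\geq 1}\times\bbN_{\geq 1}\times\bbN\times\bbN$ under \eqref{EIV-common-requirement}.
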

\begin{proof}
Note that the string with $\#x=0$ gives precisely all the irreducible tempered representations of EIV with non-zero Dirac cohomology. The result follows from Theorem 1.2 of \cite{DD16} and Table \ref{table-EIV-string-part}.
\end{proof}

\begin{table}[H]
\centering
\caption{FS-scattered members of $\widehat{\rm EIV}^{\mathrm{d}}$}
\begin{tabular}{l|c|c|c|c|c|r}
$\# x$ &   $\lambda$  & $\nu$ & spin LKTs & mult & u-small& string limit\\
\hline
$19$ & $[1,2,0,1,0,1]$ & $[\frac{3}{2},3,-\frac{3}{2},0,-\frac{3}{2},\frac{3}{2}]$ & $[1, 1, 0, 0]$ & $1$ & Yes  & $\#7$, $a+f=-1$\\
$44$ & $[1,1,1,1,1,1]$ & $[4,0,0,0,0,4]$ & $[0, 0, 0, 0]$ & $1$ & Yes
\end{tabular}
\label{table-EIV-scattered-part}
\end{table}

\begin{table}[H]
\centering
\caption{Strings of $\widehat{\rm EIV}^{\mathrm{d}}$}
\begin{tabular}{r|c|c|c|c|c}
$\# x$ &   $\lambda$  & $\nu$ & spin LKTs & mult \\
\hline
$0$ & $[a, b, c, d, e, f]$ & $[0, 0, 0, 0, 0, 0]$ & $[a+f, c+e, d-1, b-1]$ & $1$ \\
$1$ & $[a, b, 1, d, 1, f]$ & $[-\frac{1}{2},0,1,-1,1,-\frac{1}{2}]$ & $[a+f+1, 0, d, b-1]$ & $1$ \\
$2$ & $[1, b, c, d, e, 1]$ & $[1,0,-\frac{1}{2},0,-\frac{1}{2},1]$ & $[0, c+e+1, d-1, b-1]$ & $1$\\
$3$ & $[a, b, 1,1,1, f]$ & $[-1,-2,0,2,0,-1]$ & $[a+f+2, 0, 0, b]$& $1$ \\
$7$ & $[a, 1, 1,1,1, f]$ & $[-\frac{3}{2},3,0,0,0,-\frac{3}{2}]$ & $[a+f+3, 0, 0, 0]$& $1$ \\
$9$ & $[1, b, 1, d, 1, 1]$ & $[1,0,1,-2,1,1]$ & $[0, 0, d+1, b-1]$& $1$ \\
$9$ & $[1, b, 1, d-1, 1, 1]$ & $[\frac{1}{2},0,\frac{1}{2},-1,\frac{1}{2},\frac{1}{2}]$ & $[1, 1, d-1, b-1]$& $1$\\
$13$ & $[1, b-1, 0,2, 0, 1]$ & $[1,-2,-1,2,-1,1]$ & $[1, 1, 0, b-1]$& $1$\\
$22$ & $[1, b, 1,1,1, 1]$ & $[0,-4,2,0,2,0]$ & $[0, 0, 0, b+2]$& $1$
\end{tabular}
\label{table-EIV-string-part}
\end{table}

\section{The set $\widehat{\rm FI}^d$}
This section aims to  classify the Dirac series for  ${\rm FI}=F_{4(4)}$, which is realized in $\texttt{atlas}$ via the command
\texttt{G:F4\_s}. This equal rank group is centerless, connected, but \emph{not} simply connected.  We adopt the simple roots of $\Delta^+(\frg, \frt_f)$ and  $\Delta^+(\frk, \frt_f)$ as in Knapp \cite[Appendix C]{Kn}.
In particular, its Vogan diagram is presented in Fig.~\ref{Fig-FI-Vogan}, where $\alpha_1, \alpha_2$ are short, while $\alpha_3, \alpha_4$ are long.
Let $\{\xi_1, \xi_2, \xi_3, \xi_4\}$ be the fundamental weights corresponding to $\Delta^+(\frg, \frt_f)$. The simple roots for $\Delta^+(\frk, \frt_f)$ are
$$
\gamma_1=\alpha_1, \quad \gamma_2=\alpha_2, \quad \gamma_3=\alpha_3, \quad \gamma_4=2\alpha_1+4\alpha_2+3\alpha_3+2\alpha_4.
$$
The Dynkin diagram for $\Delta^+(\frk, \frt_f)$ is given in Fig.~\ref{Fig-FI-K-Dynkin}.
Let $\varpi_1, \dots, \varpi_4$ be the fundamental weights for $\Delta^+(\frk, \frt_f)$.
We will express the $\texttt{atlas}$ parameters $\lambda$ and $\nu$ in terms of $\xi_1, \dots, \xi_4$, and express highest weights of $K$-types in terms of $\varpi_1, \dots, \varpi_4$. Note that the $K$-types are parameterized via the highest weight theorem by $[a, b, c, d]$ such that $a, b, c, d$ are members of $\bbN$ and that $a+c+d$ is even.

\begin{figure}[H]
\centering
\scalebox{0.6}{\includegraphics{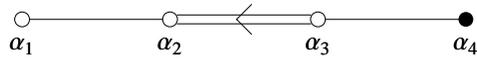}}
\caption{The Vogan diagram for FI}
\label{Fig-FI-Vogan}
\end{figure}

\begin{figure}[H]
\centering
\scalebox{0.6}{\includegraphics{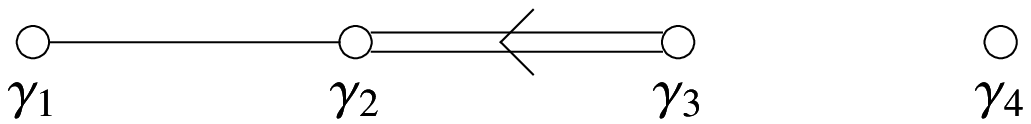}}
\caption{The Dynkin diagram for $\Delta^+(\frk, \frt_f)$}
\label{Fig-FI-K-Dynkin}
\end{figure}

Note that \texttt{atlas} labels the simple roots of $\Delta^+(\frg, \frt_f)$ in the way \textbf{opposite to} that of Fig.~\ref{Fig-FI-Vogan}. Thus whenever we put the parameters $\lambda$ and $\nu$ in the tables of this section into $\texttt{atlas}$, we should \textbf{reverse} the order of their coordinates.


\begin{example}\label{exam-FI-scattered-part}
Let us illustrate the algorithm in Section \ref{sec-algorithm} for FI.
\begin{itemize}
\item[$\bullet$]
Step (a) gives us $369272$ candidates for $\Lambda$, among which $95849$ are not strongly regular. By Theorem \ref{thm-SR}, it suffices to focus on the latter ones.

\item[$\bullet$] FI has $229$ KGB elements in total. The following ones are fully supported:
$$
68, 69, [80, 87], 90, 93, [95, 98], 100, [103, 109], [112, 228].
$$

\item[$\bullet$] Now say fix $\#x=225$. Then only one representation survives after carrying out steps (b) and (c). This gives the nineteenth  row of Table \ref{table-FI-scattered-part}.
\end{itemize}
To sum up, the FS-scattered members of $\widehat{\rm FI}^d$ are exhausted in Table \ref{table-FI-scattered-part}, where in the last row sits the trivial representation.
\hfill\qed
\end{example}

According to the calculation carried out by Jeffrey Adams and Steve Miller in September 2019, there are eight weakly unipotent representations in Table \ref{table-FI-scattered-part}: those with $\#x=204, 215, 218, 223, 225, 228$. We note that each spin-lowest $K$-type in Table \ref{table-FI-scattered-part} is u-small, and occurs with multiplicity one.

The strings of $\widehat{\rm FI}^d$ are given in Tables
\ref{table-FI-string-part-card0}--\ref{table-FI-string-part-card3} according to $|{\rm supp}(x)|$---the cardinality of the support of $x$.
In these tables, the coordinates $a, b, c, d$ are members of $\bbN$ such that the infinitesimal character
$$
\Lambda=[d, c, b, a]
$$
and that \begin{equation}\label{FI-common-requirement}
a+b>0, \quad b+c>0, \quad c+d>0.
\end{equation}
In some cases, there are stronger requirements for certain coordinates. They will be put within the column ``spin LKTs". Every spin-lowest $K$-type in these tables occurs with multiplicity one.

\begin{table}[H]
\centering
\caption{FS-scattered members of $\widehat{\rm FI}^{\mathrm{d}}$}
\begin{tabular}{r|c|c|c|r}
$\# x$ &   $\lambda$  & $\nu$ & spin LKTs & string limit\\
\hline
$69$ & $[0,1,0,1]$ & $[-\frac{3}{2},\frac{5}{2},-\frac{3}{2},1]$ & $[3,0,1,4]$, $[4,1,0,2]$ & $\# 48$, $b=-1$\\
$83$ & $[3,1,-1,2]$ & $[3,0,-\frac{3}{2},\frac{3}{2}]$ & $[0,1,0,8]$ & \\
$106$ & $[3,-2,2,0]$ & $[3,-3,\frac{3}{2},0]$ & $[1,0,0,7]$ & $\#66$, $a=-4$\\
$108$ & $[1,2,-1,2]$ & $[0,\frac{5}{2},-\frac{5}{2},\frac{5}{2}]$ & $[6,0,0,0]$ & $\#74$, $a=-1$\\
$123$ & $[5,-4,2,3]$ & $[4,-4,\frac{3}{2},1]$ & $[2,1,0,8]$, $[3,0,0,7]$ & $\#102$, $d=-1$\\
$132$ & $[1,1,-1,4]$ & $[1,1,-2,3]$ & $[0,0,3,3]$, $[0,2,1,5]$ & 2nd $\#110$, $a=-1$\\
$133$ & $[4,-1,1,1]$  & $[\frac{9}{2},-\frac{7}{2},1,1]$ &  $[0,4,0,0]$, $[1,4,0,1]$ & 2nd $\#111$, $d=-1$\\
$142$ & $[1,3,-1,1]$ & $[0,4,-\frac{5}{2},1]$ & $[0,2,0,8]$, $[1,1,0,7]$, $[2,0,0,6]$ & $\#102$, $d=-2$\\
$147$ & $[-3,4,0,1]$ & $[-\frac{5}{2},\frac{5}{2},0,0]$ & $[4,0,1,1]$ & $\#74$, $a=-2$\\
$153$ & $[1,5,-2,1]$ &$[1,\frac{7}{2},-\frac{5}{2},1]$ & $[0,3,0,0]$, $[1,3,0,1]$, $[2,3,0,2]$& 2nd $\#111$, $d=-2$\\
$163$ & $[1,-2,4,-1]$ & $[0,-1,\frac{5}{2},-\frac{3}{2}]$ & $[0,1,0,6]$, $[1,0,1,6]$ & $\#102$, $d=-3$\\
$175$ & $[1,0,3,-2]$ & $[1,-\frac{3}{2},\frac{5}{2},-\frac{3}{2}]$ & $[1,2,1,2]$, $[2,2,0,2]$ & 2nd $\#111$, $d=-3$\\
$176$ & $[-2,3,0,1]$ & $[-2,3,-1,1]$ & $[0,0,3,1]$, $[0,2,0,4]$ & 2nd $\#110$, $a=-2$\\
$204$ & $[-2, 3,0,3]$ & $[-1,1,0,\frac{3}{2}]$ & $[0,0,1,5]$, $[0,0,1,7]$ & 2nd $\#110$, $a=-4$\\
$209$ & $[-1,4,-1,4]$ & $[-\frac{1}{2},\frac{3}{2},-\frac{1}{2},\frac{3}{2}]$& $[2,1,1,1]$, $[2,1,1,3]$ & 2nd $\#111$, $d=-4$\\
$215$ & $[3,1,0,2]$ & $[2,0,0,\frac{3}{2}]$ & $[1,0,1,4]$, $[3,0,1,6]$ &\\
$218$ & $[2,1,0,3]$ & $[\frac{3}{2},1,-\frac{1}{2},\frac{3}{2}]$ & $[0,1,2,2]$, $[0,3,0,0]$ & \\
$223$ & $[0,3,0,1]$ & $[0, 1, 0,1]$ & $[3,0,1,2]$, $[3,0,1,4]$ & 2nd $\#111$, $d=-5$\\
$225$ & $[1,3,0,1]$ & $[1,1,0,1]$ & $[0,0,1,3]$, $[2,0,1,5]$ & \\
$228$ & $[1,1,1,1]$ & $[1,1,0,1]$& $[0,0,3,3]$, $[0,2,1,1]$& \\
$228$ & $[1, 1, 1, 1]$ & $[0, 1, 0, 1]$ & $[2,0,2,0]$, $[2,0,2,2]$ & 2nd $\#111$, $d=-5$\\
$228$ & $[1, 1, 1, 1]$ & $[1, 1, 1, 1]$ & $[0, 0, 0, 0]$ &
\end{tabular}
\label{table-FI-scattered-part}
\end{table}

\begin{table}[H]
\centering
\caption{Strings of $\widehat{\rm FI}^{\mathrm{d}}$ with $|{\rm supp}(x)|=0$}
\begin{tabular}{r|c|c|c}
$\# x$ &   $\lambda$   & $\nu$ &spin LKT=LKT \\
\hline
$0$ & $[d,c,b,a]$ & $[0,0,0,0]$ & $[2b + c + 1, c+d, a+b, a+b+c + 1]$\\
$1$ & $[d,c,b,a]$ & $[0,0,0,0]$ & $[2a + 2b+c+3, c+d, b - 1, b + c]$, $b\geq1$\\
$2$ & $[d,c,b,a]$ & $[0,0,0,0]$ & $[c - 1, 2b+c+d + 2, a - 1, a+2b+c + 2]$, $a,c\geq 1$\\
$3$ & $[d,c,b,a]$ & $[0,0,0,0]$ & $[2b + c + 1, d - 1, a+b + c + 1, a+b]$, $d\geq1$\\
$4$ & $[d,c,b,a]$ & $[0,0,0,0]$ & $[2b +c +d+ 2, c - 1, a+b, a + b + c + d + 2]$, $c\geq1$\\
$5$ & $[d,c,b,a]$ & $[0,0,0,0]$ & $[2a+2b +c+ 3, d - 1, b + c, b - 1]$, $b,d\geq 1$\\
$6$ & $[d,c,b,a]$ & $[0,0,0,0]$ & $[2a + 2b + c +  d + 4, c - 1, b - 1,  b + c +d+ 1]$, $b, c\geq 1$\\
$7$ & $[d,c,b,a]$ & $[0,0,0,0]$ & $[c+d, 2b + c + 1, a- 1, a + 2b +  c + d + 3]$, $a\geq 1$\\
$8$ & $[d,c,b,a]$ & $[0,0,0,0]$ & $[c - 1, d - 1, a+2b+c + 2, a - 1]$, $a,c,d\geq 1$\\
$9$ & $[d,c,b,a]$ & $[0,0,0,0]$ & $[d - 1, 2b + c + 1, a - 1, a + 2 b + 2 c + d + 4]$, $a,d\geq 1$\\
$10$ & $[d,c,b,a]$ & $[0,0,0,0]$ & $[d - 1, c - 1, a+b, a + 3 b + 2 c + d + 5]$, $c,d\geq 1$\\
$11$ & $[d,c,b,a]$ & $[0,0,0,0]$ & $[d - 1, c - 1, b - 1, 2a + 3b +2c +d + 6]$, $b,c,d\geq 1$
\end{tabular}
\label{table-FI-string-part-card0}
\end{table}

\begin{cor}
Let $G$ be {\rm FI}. Then all the $K$-types whose spin norm is equal to their lambda norm are exactly the ones in the last column of Table \ref{table-FI-string-part-card0}.
\end{cor}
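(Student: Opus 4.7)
The plan is to mirror the proofs of the analogous corollaries for FII and EIV that appear earlier in the paper. The key input is Theorem 1.2 of \cite{DD16}, which asserts that a $K$-type $\mu$ satisfies $\|\mu\|_{\mathrm{spin}}=\|\mu\|_{\mathrm{lambda}}$ if and only if $\mu$ arises as a spin-lowest $K$-type of some irreducible tempered $(\frg,K)$-module with real infinitesimal character and non-zero Dirac cohomology. Thus the task reduces to identifying those tempered Dirac series and reading off their spin-lowest $K$-types.

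First I would observe that the strings in Table \ref{table-FI-string-part-card0} are exactly the ones with $|{\rm supp}(x)|=0$, i.e.\ those whose KGB element has empty support. In the \texttt{atlas} parameterization this forces $\nu=[0,0,0,0]$, so the corresponding irreducible representations are tempered. Since FI is equal rank, these are precisely the (limits of) discrete series representations. The twelve rows $\#x=0,\dots,11$ in Table \ref{table-FI-string-part-card0} together with the ranges on $(a,b,c,d)$ given by \eqref{FI-common-requirement} and the individual positivity conditions in the last column therefore enumerate (after verification) all such tempered Dirac series of FI.

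Next I would argue the converse direction: every irreducible tempered $(\frg,K)$-module of FI with real infinitesimal character and non-zero Dirac cohomology actually appears in Table \ref{table-FI-string-part-card0}. This follows from the algorithm recalled in Section \ref{sec-algorithm}: steps (a)--(c) produce \emph{all} Dirac series whose infinitesimal characters lie in the ball of Theorem A of \cite{D17}, and in particular they produce all tempered Dirac series, which are subsequently organized into the $|{\rm supp}(x)|=0$ strings. I would verify by direct inspection of the \texttt{atlas} output (as in Example \ref{exam-FI-scattered-part}) that no tempered module with non-zero Dirac cohomology has been missed among the twelve families.

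Finally I would invoke Theorem A of Theorem 1.2 of \cite{DD16}: since the spin-lowest $K$-type of each of these tempered modules is recorded in the last column of Table \ref{table-FI-string-part-card0} (and, by the table, is unique of multiplicity one), these $K$-types are precisely the ones whose spin norm equals their lambda norm. The main obstacle is the bookkeeping of the second step, namely confirming that the twelve strings genuinely cover all tempered Dirac series of FI without overlap; here one must check that the conditions attached in the last column (e.g.\ $b\geq 1$, $c,d\geq 1$, etc.) exactly distinguish among the twelve KGB orbits $\#x=0,\dots,11$ used to realize the tempered representations, and that together with \eqref{FI-common-requirement} they avoid double-counting.
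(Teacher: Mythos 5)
Your proposal is correct and follows essentially the same route as the paper: the paper's proof simply observes that the strings with $0\leq \#x\leq 11$ (those with $\nu=0$, i.e.\ the tempered members) exhaust all irreducible tempered representations of FI with non-zero Dirac cohomology, and then invokes Theorem 1.2 of \cite{DD16}. Your additional bookkeeping about the KGB orbits and the positivity conditions is just an expanded version of the same verification implicit in the paper's one-line argument.
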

\begin{proof}
Note that the strings with $0\leq \#x\leq 11$ give precisely all the irreducible tempered representations of FI with non-zero Dirac cohomology. The result follows from Theorem 1.2 of \cite{DD16}.
\end{proof}

\begin{example}\label{exam-counter}
Let us look at the Dirac cohomology of the scattered representation $\pi$ with $\#x=176$ carefully. As noted in Table \ref{table-FI-scattered-part}, $\pi$ has two spin lowest $K$-types:
$$
[0,0,3,1], \quad [0,2,0,4].
$$
We enumerate the set $W(\frg, \frt_f)^1$ as follows:
\begin{align*}
w^{(0)}&=e, \quad w^{(1)}=s_4, \quad w^{(2)}=s_4s_3, \quad w^{(3)}=s_4s_3s_2, \quad w^{(4)}=s_4s_3s_2s_1,  \\
w^{(5)}&=s_4s_3s_2s_3, \quad w^{(6)}=s_4s_3s_2s_1s_3, \quad w^{(7)}=s_4s_3s_2s_3s_4, \quad w^{(8)}=s_4s_3s_2s_1s_3s_2, \\
w^{(9)}&=s_4s_3s_2s_1s_3s_4,\quad w^{(10)}=s_4s_3s_2s_1s_3s_2s_3, \quad w^{(11)}=s_4s_3s_2s_1s_3s_2s_4.
\end{align*}
For $0\leq j\leq 11$, put $\rho_n^{(j)}=w^{(j)}\rho-\rho_c$. They are listed as follows:
\begin{align*}
\rho_n^{(0)}&=[0, 0, 0, 7], \quad \rho_n^{(1)}=[0, 0, 1, 6], \quad \rho_n^{(2)}=[0, 2, 0, 5], \quad \rho_n^{(3)}=[1, 2, 0, 4], \\
\rho_n^{(4)}&=[0, 3, 0, 3], \quad \rho_n^{(5)}=[3, 0, 1, 3], \quad \rho_n^{(6)}=[2, 1, 1, 2], \quad \rho_n^{(7)}=[5, 0, 0, 2],\\
\rho_n^{(8)}&=[2, 0, 2, 1], \quad \rho_n^{(9)}=[4, 1, 0, 1], \quad \rho_n^{(10)}=[0, 0, 3, 0], \quad \rho_n^{(11)}=[4, 0, 1, 0].
\end{align*}
Since the longest element of $W(\frk, \frt_f)$ is $-1$, the lowest weight of the $\frk$-type with highest weight $\rho_n^{(j)}$ is always $-\rho_n^{(j)}$.
One computes that the spin lowest $K$-type $[0,2,0,4]$ contributes a unique $\widetilde{K}$-type to $H_D(\pi)$:
$$
\{[0,2,0,4]-\rho_n^{(2)}\}=\{[0,2,0,4]-[0,2,0,5]\}=[0,0,0,1].
$$
Since $w^{(2)}=s_4s_3$ has even length, this $\widetilde{K}$-type actually lives in $H_D^+(\pi)$, the even part of $H_D(\pi)$.

On the other hand, the spin lowest $K$-type $[0,0,3,1]$ contributes a unique $\widetilde{K}$-type to $H_D(\pi)$:
$$
\{[0,0,3,1]-\rho_n^{(10)}\}=\{[0,0,3,1]-[0,0,3,0]\}=[0,0,0,1].
$$
Since $w^{(10)}=s_4s_3s_2s_1s_3s_2s_3$ has odd length, this $\widetilde{K}$-type actually lives in $H_D^-(\pi)$, the odd part of $H_D(\pi)$.

This gives a counter-example to Conjecture 10.3 of \cite{H15} asserting that
$$
{\rm Hom}_{\widetilde{K}}(H_D^+(X), H_D^-(X))=0
$$
for any irreducible $(\frg, K)$-module $X$ whenever $G$ is equal rank.

Note that $H_D(\pi)\neq 0$, while the Dirac index $H_D^+(\pi)-H_D^-(\pi)$ vanishes.
\hfill\qed
\end{example}
\begin{rmk} (a)\
In view of Theorem 8.3 of \cite{H15}, $\pi$ is \emph{not} an elliptic representation since its Dirac index is zero. Thus $\pi$ may also violate Conjecture 13.2 of \cite{H15}.

\smallskip

\noindent (b)
One can realize $\pi$ as a string limit according to the last column of Table \ref{table-FI-scattered-part}. In this way one checks that $\pi$ is actually a fair $A_{\mathfrak{q}}(\lambda)$-module.

\smallskip

\noindent (c)\ By using the translation principle \cite{MPV}, another way of showing that the Dirac index of $\pi$ vanishes has been given in \cite{DW}.
\end{rmk}

\begin{table}[H]
\centering
\caption{Strings of $\widehat{\rm FI}^{\mathrm{d}}$ with $|{\rm supp}(x)|=1$}
\begin{tabular}{r|c|c|c|r}
$\# x$ &   $\lambda$   & $\nu$ &spin LKTs  \\
\hline
$12$ & $[1, c, b, a]$ & $[1,-\frac{1}{2},0,0]$ & $[2b+c+2, c, a+b, a+b+c+2]$\\
$13$ & $[1, c, b, a]$ & $[1,-\frac{1}{2},0,0]$ & $[2a+2b+c+4, c, b-1, b+c+1]$, $b\geq 1$\\
$14$ & $[1, c, b, a]$ & $[1,-\frac{1}{2},0,0]$ & $[c, 2b+c+2, a-1, a+2b+c+3]$, $a\geq 1$\\
$15$ & $[d, 1, b, a]$ & $[-\frac{1}{2},1,-\frac{1}{2},0]$ & $[2b+2, d, a+b+1, a+b+1]$\\
$16$ & $[d, 1, b, a]$ & $[-\frac{1}{2},1,-\frac{1}{2},0]$ & $[2a+2b+4, d, b, b]$, $b\geq 1$\\
$17$ & $[d, 1, b, a]$ & $[-\frac{1}{2},1,-\frac{1}{2},0]$ & $[d, 2b+2, a-1, a+2b+d+5]$, $a\geq 1$\\
$18$ & $[d, c, 1, a]$ &  $[0,-1,1,-\frac{1}{2}]$ & $[c+1, c+d+2, a, a+c+3]$\\
$19$ & $[d, c, 1, a]$ &  $[0,-1,1,-\frac{1}{2}]$ & $[c+d+2, c+1, a, a+c+d+4]$, $a+c\geq 1$\\
$20$ & $[d, c, 1, a]$ &  $[0,-1,1,-\frac{1}{2}]$ & $[c+1, d-1, a+c+3, a]$, $d\geq 1$\\
$21$ & $[d, c, 1, a]$ &  $[0,-1,1,-\frac{1}{2}]$ & $[d-1, c+1, a, a+2c+d+7]$, $a+c\geq 1$, $d\geq 1$\\
$22$ & $[d, c, b, 1]$ & $[0, 0, -\frac{1}{2}, 1]$ & $[2b+c+3, c+d, b, b+c+1]$\\
$23$ & $[d, c, b, 1]$ & $[0, 0, -\frac{1}{2}, 1]$ & $[2b+c+3, d-1, b+c+1, b]$, $d\geq 1$\\
$24$ & $[d, c, b, 1]$ & $[0, 0, -\frac{1}{2}, 1]$ & $[2b+c+d+4, c-1, b, b+c+d+2]$, $c\geq 1$\\
$25$ & $[d, c, b, 1]$ & $[0, 0, -\frac{1}{2}, 1]$ & $[d-1, c-1, b, 3b+2c+d+7]$, $c\geq 1, d\geq 1$
\end{tabular}
\label{table-FI-string-part-card1}
\end{table}

\begin{table}[H]
\centering
\caption{Strings of $\widehat{\rm FI}^{\mathrm{d}}$ with $|{\rm supp}(x)|=2$}
\begin{tabular}{r|c|c|c|r}
$\# x$ &   $\lambda$   & $\nu$ &spin LKTs  \\
\hline
$26$ & $[1,c,1,a]$ & $[1,-\frac{3}{2},1,-\frac{1}{2}]$ & $[c+2,c+2,a, a+c+4]$\\
$27$ & $[1,c,b,1]$ & $[1, -\frac{1}{2},-\frac{1}{2},1]$& $[2b+c+4,c,b,b+c+2]$\\
$28$ & $[d,1,b,1]$ & $[-\frac{1}{2}, 1,-1,1]$ & $[2b+4, d,b+1,b+1]$\\
$29$ & $[1,1,b,a]$ & $[1,1,-1,0]$ & $[2b+3,0,a+b+1, a+b+2]$\\
$30$ & $[1,1,b,a]$ & $[1,1,-1,0]$ & $[2a+2b+5,0,b,b+1]$, $b\geq 1$\\
$31$ & $[1,1,b,a]$ & $[1,1,-1,0]$ & $[0,2b+3,a-1,a+2b+5]$, $a\geq 1$\\
$32$ & $[d-2,3,0,a-1]$ & $[-1,1,0,-\frac{1}{2}]$ & $[0,d+2,a,a+2]$, $a\geq 1$, $d\geq 1$\\
$33$ & $[d-2,3,0,a-1]$ & $[-1,1,0,-\frac{1}{2}]$ & $[0,d,a+2,a]$, $a\geq 1$\\
$34$ & $[d-2,3,0,a-1]$ & $[-1,1,0,-\frac{1}{2}]$ & $[d+2,0,a,a+d+4]$, $a\geq 1$, $d\geq 1$\\
$35$ & $[d-2,3,0,a-1]$ & $[-1,1,0,-\frac{1}{2}]$ & $[d,0,a,a+d+6]$, $a\geq 1$\\
$37$ & $[d, 1,1,a]$ & $[-\frac{3}{2},0, \frac{3}{2},-\frac{3}{2}]$ & $[2a+6,d+1,0,0]$\\

$39$ & $[d,c,1,1]$ & $[0,-2,1,1]$ & $[c+3,c+d+2,0,c+3]$, $c+d\geq 1$\\
$40$ & $[d,c,1,1]$ & $[0,-2,1,1]$ & $[c+d+4,c+1,0,c+d+4]$, $c+d\geq 1$\\
$41$ & $[d,c,1,1]$ & $[0,-2,1,1]$ & $[c+3,d-1,c+3,0]$, $d\geq 1$\\
$42$ & $[d,c,1,1]$ & $[0,-2,1,1]$ & $[d-1,c+1,0,2c+d+9]$, $d\geq 1$\\
\hline
$44$ & $[d-1,1,1,a-1]$ & $[-1,1,0,-\frac{1}{2}]$& $[2, d+1, a, a+2]$, $a\geq 1$, $d\geq 0$\\
& & & $[2,d-1,a+2,a]$, $a\geq 1$, $d\geq 1$\\
\hline
$44$ & $[d,1,1,a]$ & $[-2,1,1,-\frac{3}{2}]$& $[0, d+2, a+2, a+2]$\\
\hline
$45$ & $[d-1,1,1,a-1]$ & $[-1,1,0,-\frac{1}{2}]$& $[d+1,2, a-1, a+d+4]$, $a\geq 1$, $d\geq 0$ \\
& & & $[d-1,2, a-1,a+d+6]$, $a\geq 1$, $d\geq 1$\\
\hline
$45$ & $[d,1,1,a]$ & $[-2,1,1,-\frac{3}{2}]$& $[d+2,0, a+1, a+d+7]$\\
\end{tabular}
\label{table-FI-string-part-card2}
\end{table}

\begin{table}[H]
\centering
\caption{Strings of $\widehat{\rm FI}^{\mathrm{d}}$ with $|{\rm supp}(x)|=3$}
\begin{tabular}{r|c|c|c|r}
$\# x$ &   $\lambda$   & $\nu$ &spin LKTs  \\
\hline
$43$ & $[0,1,0,a]$ & $[-\frac{1}{2},\frac{3}{2},-\frac{1}{2},-\frac{1}{2}]$ & $[1,2,a-1,a+4]$, $a\geq 1$; $[2,1,a,a+2]$, $a\geq 1$\\
$50$ & $[3,1,0,a-1]$ & $[2,0,0,-1]$ & $[1,0,a+2,a+1]$, $a\geq 1$\\
$52$ & $[3,1,0,a-1]$ & $[2,0,0,-1]$ & $[0,1,a,a+6]$, $a\geq 1$\\
$54$ & $[2,-1,2,a-1]$ & $[\frac{3}{2},-\frac{3}{2},\frac{3}{2},-\frac{3}{2}]$ & $[2a+5,1,0,1]$, $a\geq 1$\\
$65$ & $[5,-2,3,a-3]$ & $[2,-1,1,-\frac{3}{2}]$ & $[1,1, a+2, a+1]$, $a\geq 0$; $[0,1,a+2,a+2]$, $a\geq 1$\\
$66$ & $[5,-2,3,a-3]$ & $[2,-1,1,-\frac{3}{2}]$ & $[1,0,a+1,a+6]$, $a\geq 0$; $[1,1,a,a+7]$, $a\geq 1$\\
$73$ & $[0,3,0,a-2]$ & $[0,1,0,-1]$ & $[3,0,a,a+3]$, $a\geq 1$\\
$74$ & $[1,1,1,a]$ & $[0,\frac{5}{2},0,-\frac{5}{2}]$ & $[2a+8,0,0,0]$\\
$75$ & $[0,3,0,a-2]$ & $[0,1,0,-1]$ & $[0,3,a-1,a+3]$, $a\geq 1$\\
$88$ & $[1, 2, 0, a-1]$ &  $[1,1,0,-\frac{3}{2}]$  & $[0,3,a,a+2]$, $a\geq 1$; $[2,1,a,a+4]$, $a\geq 1$\\
$88$ & $[1, 3, 0, a-2]$ &  $[1,1,0,-\frac{3}{2}]$  & $[0,0,a+2,a+2]$, $a\geq 1$; $[2,0,a+2,a]$, $a\geq 1$\\
$89$ & $[1, 2, 0, a-1]$ &  $[1,1,0,-\frac{3}{2}]$  & $[1,2,a,a+3]$, $a\geq 1$; $[3,0,a,a+5]$, $a\geq 1$\\
$89$ & $[1, 3, 0, a-2]$ &  $[1,1,0,-\frac{3}{2}]$  & $[0,0,a+1,a+5]$, $a\geq 1$; $[0,2,a-1,a+7]$, $a\geq 1$\\
$110$ & $[1, 1, 1, a-2]$ &  $[0,1,0,-1]$  & $[0,2,a-1,a+5]$, $a\geq 1$; $[2,0, a+1,a+1]$, $a\geq 1$\\
$110$ & $[1, 1, 1, a]$ &  $[1,1,1,-3]$  & $[0,0,a+3,a+5]$\\
$48$ & $[1, b, 1, 1]$ &  $[1,-\frac{5}{2},1,1]$  & $[b+4,b+2,0,b+4]$\\
$47$ & $[1, 1, c, 1]$ &  $[1,1,-\frac{3}{2},1]$  & $[2c+5,0,c+1,c+2]$\\
$56$ & $[d-2,3,-1,2]$ & $[-2,2,-1,1]$ & $[2, d+1,1,3]$, $d\geq 0$;
$[2,d+2,0,2]$, $d\geq 1$\\
$57$ & $[d-2,3,-1,2]$ & $[-2,2,-1,1]$ & $[d+4,0,0,d+4]$, $d\geq 1$ \\
$58$ & $[d-2,3,-1,2]$ & $[-2,2,-1,1]$ & $[2,d,2,0], d\geq 0$; $[2,d-1,3,1], d\geq 1$\\
$59$ & $[d-2,3,-1,2]$& $[-2,2,-1,1]$ & $[d,0,0,d+8]$ \\
$61$ & $[d-1,1,0,2]$ & $[-\frac{3}{2},0,0,\frac{3}{2}]$ & $[4,d+1,0,0]$ \\
$70$ & $[d-2,1,0,3]$& $[-2,1,-\frac{1}{2},\frac{3}{2}]$ & $[2,d,2,2]$ \\
$71$ & $[d-1,1,0,2]$ & $[-2,1,-\frac{1}{2},\frac{3}{2}]$ & $[d+3,0,1,d+4]$, $d\geq 0$; $[d-1,0,1,d+8]$, $d\geq 1$ \\
$71$ & $[d-2,1,0,3]$ & $[-2,1,-\frac{1}{2},\frac{3}{2}]$& $[d,0,1,d+7]$, $d\geq 0$; $[d+2,0,1, d+5]$, $d\geq 1$ \\
$76$ & $[d-2,0,2,0]$ & $[-2,0,1,0]$ & $[1,d+1,1,2]$, $d\geq 1$; $[3, d,1,2]$, $d\geq 1$\\
$77$ & $[d-2,0,2,0]$ & $[-2,0,1,0]$ & $[1,d,2,1]$, $d\geq 1$; $[3, d-1,2,1]$, $d\geq 1$\\
$99$ & $[d-4,3,0,2]$ & $[-\frac{5}{2},1,0,1]$ & $[4,d,1,1]$\\
$102$ & $[d,1,1,1]$ & $[-4,0,\frac{3}{2},1]$ & $[d+4,0,0,d+8]$\\
$111$ & $[d-2,1,1,1]$ & $[-\frac{5}{2},1,0,1]$ & $[2,d+1,1,1]$\\
$111$ & $[d,1,1,1]$ & $[-\frac{9}{2},1,1,1]$ & $[0,d+5,0,0]$
\end{tabular}
\label{table-FI-string-part-card3}
\end{table}

Finally, let us present an example saying that a spin-lowest $K$-type of an irreducible unitary representation could have multiplicity bigger than one.

\begin{example}\label{exam-spin-mult-big}
Let us consider the irreducible representation of FI with parameter $p=(x, \lambda, \nu)$, where $\#x=81$ which is fully supported, $\lambda=[4,0,-1,2]$ and $\nu=[2,0,-1,1]$.  This representation is unitary, and has infinitesimal character $\Lambda=[1,0,1/2, 1/2]$. (Recall that we should reverse the coordinates of $\lambda$, $\nu$  and $\Lambda$ for \texttt{atlas}.) It has a unique lambda-lowest $K$-type $[2,1,0,4]$, and has five spin-lowest $K$-types
$$
[0, 2, 0, 6], \quad [1, 2, 0, 5], \quad [3, 0, 1, 4], \quad [0, 3, 0, 4], \quad [2, 1, 1, 3].
$$
Their multiplicities are $1, 2, 1, 1, 1$, respectively. Note that this representation has zero Dirac cohomology. Indeed, it has spin norm $3$, which is strictly larger than $\|\Lambda\|=\sqrt{\frac{15}{2}}$.
\hfill\qed
\end{example}

\section{The set $\widehat{\rm EI}^d$}
This section aims to classify the Dirac series for ${\rm EI}=E_{6(6)}$, which
 is realized in $\texttt{atlas}$ via the command
\texttt{G:E6\_s}. This group is centerless, connected, but \emph{not} simply connected.
It is not equal rank. Indeed, $\dim \frt_f=4$ and $\dim a_f=2$.

 We adopt the simple roots of $\Delta^+(\frg, \frt_f)$ and  $\Delta^+(\frk, \frt_f)$ as in Knapp \cite[Appendix C]{Kn}.  In particular, its Vogan diagram is presented in Fig.~\ref{Fig-EI-Vogan}. Let $\{\xi_1, \dots, \xi_6\}$ be the fundamental weights of $\Delta^+(\frg, \frh_f)$. The simple roots for $\Delta^+(\frg, \frt_f)$ are
$$
\alpha_4:=\beta_2, \quad \alpha_3:=\beta_4, \quad \alpha_2:=\frac{1}{2}(\beta_3+\beta_5), \quad \alpha_1:=\frac{1}{2}(\beta_1+\beta_6).
$$
The root system $\Delta^+(\frg, \frt_f)$ is $F_4$, with $\alpha_1, \alpha_2$ short and $\alpha_3, \alpha_4$ long. On the other hand, $\Delta^+(\frk, \frt_f)$ is $C_4$, and has simple roots
$$
\gamma_1:=\alpha_2+\alpha_3+\alpha_4, \quad \gamma_2:=\alpha_1, \quad \gamma_3:=\alpha_2, \quad \gamma_4:=\alpha_3.
$$
Here $\gamma_4$ is long.
Accordingly, let $\varpi_1, \dots, \varpi_4$ be the fundamental weights for $\Delta^+(\frk, \frt_f)$.
We will express the $\texttt{atlas}$ parameters $\lambda$ and $\nu$ in terms of $\xi_1, \dots, \xi_6$, and express highest weights of $K$-types in terms of $\varpi_1, \dots, \varpi_4$. Note that the $K$-types are parameterized via the highest weight theorem by $[a, b, c, d]$ such that $a, b, c, d$ are members of $\bbN$ and that $a+c$ is even.

\begin{figure}[H]
\centering
\scalebox{0.6}{\includegraphics{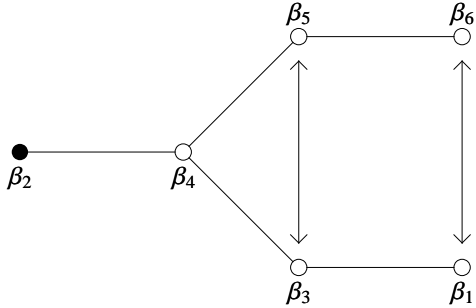}}
\caption{The Vogan diagram for EI}
\label{Fig-EI-Vogan}
\end{figure}

\begin{example}\label{exam-EI-Dirac-inequality}

Like in the case of complex $E_6$ \cite{D17E6}, distribution of the spin norm along Vogan pencils turns out to be very effective for detecting non-unitarity for  ${\rm EI}=E_{6(6)}$. Let us consider $\Lambda=[1,5,5,0,5,1]$. Again, to save space, certain outputs of \texttt{atlas} have been omitted.
\begin{verbatim}
G:E6_s
set all=all_parameters_gamma(G, [1,5,5,0,5,1])
#all
Value: 1258
\end{verbatim}
The last output says that there are $1258$ irreducible representations with infinitesimal character $[1,5,5,0,5,1]$ in total. We check that each representation is infinite-dimensional via the following command.
\begin{verbatim}
for p in all do if is_finite_dimensional(p) then prints(p) fi od
\end{verbatim}
Now we can apply Theorem C of \cite{D17} to these representations.
The following command prints the highest weight of one of the lowest $K$-types for each representation.
\begin{verbatim}
for p in all do prints(highest_weight(LKTs(p)[0], KGB(G,0))) od
\end{verbatim}
By calculating the minimum spin norm along the Vogan pencils starting from the lowest $K$-types above, and using Parthasarathy's Dirac operator inequality, we find that $1254$ of them must be non-unitary. Then we check via the command \texttt{is\_unitary} that the remaining four representations are all unitary:
\begin{verbatim}
final parameter(x=10,lambda=[1,5,5,0,5,1]/1,nu=[2,0,-1,0,-1,2]/2)
final parameter(x=8,lambda=[1,5,5,0,5,1]/1,nu=[2,0,-1,0,-1,2]/2)
final parameter(x=2,lambda=[1,5,5,0,5,1]/1,nu=[0,0,0,0,0,0]/1)
final parameter(x=0,lambda=[1,5,5,0,5,1]/1,nu=[0,0,0,0,0,0]/1)
\end{verbatim}
 All of them turn out to have non-zero Dirac cohomology. Indeed, their spin-lowest $K$-types are $[11,0,13,4]$, $[13,0,11,5]$, $[10,2,12,4]$, $[12,2,10,5]$, respectively; moreover, they all have spin norm $\sqrt{634}$, which equals $\|\Lambda\|$. Now the reader sees that the four representations merge into four strings in Tables \ref{table-EI-string-part-card0} and \ref{table-EI-string-part-card2}.
\hfill\qed
\end{example}

Carrying out the algorithm in Section \ref{sec-algorithm} for EI leads us to  Table \ref{table-EI-scattered-part}, where in the last row sits the trivial representation.
We note that each spin-lowest $K$-type in Table \ref{table-EI-scattered-part} is u-small, and occurs with multiplicity one.

The strings of $\widehat{\rm EI}^d$ are given in Tables
\ref{table-EI-string-part-card0}--\ref{table-EI-string-part-card5} according to $|{\rm supp}(x)|$---the cardinality of the support of $x$.
In these tables, the coordinates $a, b, c, d, e, f$ are members of $\bbN$ such that the infinitesimal character
$$
\Lambda=[\frac{a+f}{2}, b, \frac{c+e}{2}, d, \frac{c+e}{2}, \frac{a+f}{2}]
$$
and that
\begin{equation}\label{EI-common-requirement}
a-f=0 \mbox{ or } 1, \quad c- e=0 \mbox{ or } 1, \quad a+f\geq 1, \quad c+e\geq 1, \quad b+d\geq 1.
\end{equation}
In some cases, there are stronger requirements for certain coordinates. They will be put within the column ``spin LKTs". Every spin-lowest $K$-type in these tables occurs with multiplicity one.

\begin{cor}
Let $G$ be {\rm EI}. Then all the $K$-types whose spin norm is equal to their lambda norm are exactly the ones in the last column of Table \ref{table-EI-string-part-card0}.
\end{cor}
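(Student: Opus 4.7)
The plan is to follow exactly the template used for the three analogous corollaries already proved in the paper (for FII, EIV, and FI). The key input is Theorem 1.2 of \cite{DD16}, which identifies the $K$-types $\mu$ satisfying $\|\mu\|_{\rm spin} = \|\mu\|_{\rm lambda}$ as precisely the spin-lowest $K$-types of irreducible tempered $(\frg, K)$-modules with non-vanishing Dirac cohomology. Hence the corollary is reduced to extracting, from Table \ref{table-EI-string-part-card0}, the strings whose parameters satisfy $\nu = 0$ and reading off their spin-lowest $K$-types.

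Concretely, I would first isolate the entries of Table \ref{table-EI-string-part-card0} with $\nu = 0$; these correspond to the KGB elements $x$ with $|{\rm supp}(x)| = 0$ and hence to fundamental series representations of EI, which are tempered with real infinitesimal character. Next I would invoke the output of the algorithm in Section \ref{sec-algorithm}, together with Theorem A of \cite{D17}, to confirm that every irreducible tempered Dirac series representation of EI arises in this way: the tempered Dirac series are precisely the starting points of strings whose continuous Langlands parameter vanishes, and these are visible directly from step (b) of the algorithm. Once this correspondence is established, Theorem 1.2 of \cite{DD16} gives the claimed characterization.

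The main obstacle I anticipate is not conceptual but one of verification and bookkeeping. Because EI is \emph{not} equal rank---with $\dim \frt_f = 4$ and $\dim \fra_f = 2$---the set of compatible positive systems $(\Delta^+)^{(j)}(\frp, \frt_f)$ entering the definition \eqref{spin-norm} of the spin norm is larger than in FII or FI, so one must be careful that the spin-lowest $K$-types recorded in Table \ref{table-EI-string-part-card0} really minimize $\|\{\mu - \rho_n^{(j)}\} + \rho_c\|$ over all $j$, and that the enumeration of tempered Dirac series is exhaustive. Granting these routine checks (which parallel the computations of Section 7 and are supported by the \texttt{atlas}-assisted verification used throughout), the proof collapses to the one-line argument used in the preceding corollaries: the strings with $|{\rm supp}(x)| = 0$ exhaust all irreducible tempered representations of EI with non-zero Dirac cohomology, so the result follows from Theorem 1.2 of \cite{DD16} and Table \ref{table-EI-string-part-card0}.
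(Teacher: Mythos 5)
Your proposal is correct and follows essentially the same route as the paper: identify the strings in Table \ref{table-EI-string-part-card0} (those with $|{\rm supp}(x)|=0$, i.e.\ $\nu=0$) as precisely the irreducible tempered representations of EI with non-zero Dirac cohomology, and then apply Theorem 1.2 of \cite{DD16}. The extra cautionary remarks about the non-equal-rank setting and the exhaustiveness check are just the verification the paper's tables already encode, so no new idea is needed beyond the paper's one-line argument.
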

\begin{proof}
Note that the strings in Table \ref{table-EI-string-part-card0} are precisely all the irreducible tempered representations of EI with non-zero Dirac cohomology. The result follows from Theorem 1.2 of \cite{DD16}.
\end{proof}

We note that different parameters may represent the same module in \texttt{atlas}.
We will always choose one way to represent certain strings uniformly.
\begin{example}
Let us look at the string with $\#x=3$ in Table \ref{table-EI-string-part-card1} more closely. Take $a=b=c=1$, $e=f=0$. Then the representation is \texttt{p} below. However, $\texttt{atlas}$ will change it into another form, namely \texttt{pp} below.
\begin{verbatim}
set p=parameter(KGB(G,3), [1,1,1,1,0,0],[0,-1,-1,2,-1,0]/2)
p
Value: final parameter(x=3,lambda=[0,0,-1,3,0,1]/1,nu=[0,-1,-1,2,-1,0]/2)
set pp=parameter(KGB(G,3), [0,0,-1,3,0,1],[0,-1,-1,2,-1,0]/2)
pp=p
Value: true
\end{verbatim}
The last output confirms that both \texttt{p} and \texttt{pp} stand for the same representation.\hfill\qed
\end{example}

\begin{table}[H]
\centering
\caption{FS-scattered members of $\widehat{\rm EI}^{\mathrm{d}}$}
\begin{tabular}{r|c|c|c|r}
$\# x$ &   $\lambda$  & $\nu$ & spin LKTs & string limit \\
\hline
$109$ & $[0,2,2,-2,2,0]$ & $[-\frac{1}{2},1,\frac{3}{2},-2,\frac{3}{2},-\frac{1}{2}]$ & $ [5,1,1,0]$, $[3,1,1,1]$ & $\#37$, $c+e=-1$ \\
$137$ & $[1,2,0,0,0,1]$ & $[1,2,-1,0,-1,1]$ & $[5,1,1,0]$ & $\#43$, $a+f=-1$\\

$209$ & $[3,4,0,-1,-1,2]$ & $[\frac{3}{2},2,-\frac{1}{2},-1,-\frac{1}{2},\frac{3}{2}]$ & $[3,1,1,1]$ & $\#90$, $a+f=-1$\\

$270$ & $[1,1,1,0,1,1]$ & $[0,4,2,-4,2,0]$ & $[10,0,0,0]$ & $\#193$, $b=-1$ \\

$338$ & $[4,1,-1,0,0,3]$ & $[2,1,-1,0,-1,2]$ & $[5,1,1,0]$, $[3,1,1,1]$ & \\
$373$ & $[2,1,0,1,0,2]$ & $[3,1,-2,1,-2,3]$ & $[1,5,1,0]$ & $\#204$, $a+f=-1$\\
$448$ & $[3,9,1,-4,1,3]$ & $[1,\frac{9}{2},1,-\frac{7}{2},1,1]$ & $[0,0,2,3]$ & 3rd $\#359$, $b=-1$ \\
$567$ & $[0,3,5,-5,5,0]$ & $[1,1,2,-3,2,1]$ & $[2,2,2,0]$ & $\#204$, $a+f=-3$\\
$863$ & $[0,3,3,-2,2,1]$ & $[0,2,1,-1,1,0]$ & $[1,1,3,0]$, $[3,1,1,1]$ & $\#204$, $a+f=-5$\\
$981$ & $[2,1,2,1,2,2]$ & $[\frac{1}{2},1,\frac{1}{2},0,\frac{1}{2},\frac{1}{2}]$ & $[1,1,3,0]$, $[3,1,1,1]$ & $\#204$, $a+f=-7$\\
$981$ & $[1,1,1,1,1,1]$ & $[\frac{1}{2},1,\frac{1}{2},0,\frac{1}{2},\frac{1}{2}]$ & $[1,1,3,0]$, $[3,1,1,1]$ & $\#204$, $a+f=-7$\\
$981$ & $[1,1,1,1,1,1]$ & $[1,1,1,0,1,1]$ & $[0,0,0,3]$ & \\
$981$ & $[1,1,1,1,1,1]$ & $[1,1,1,1,1,1]$ & $[0,0,0,0]$ &
\end{tabular}
\label{table-EI-scattered-part}
\end{table}

\begin{table}[H]
\centering
\caption{Strings of $\widehat{\rm EI}^{\mathrm{d}}$ with $|{\rm supp}(x)|=0$}
\begin{tabular}{r|c|c|c|r}
$\# x$ &   $\lambda$   & $\nu$ &spin LKT=LKT  \\
\hline
$0$ & $[a,b,c,d,e,f]$ & $[0,0,0,0,0,0]$ & $[c+2d+e+2,a+f,c+e,b+d]$\\
$1$ & $[a,b,c,d,e,f]$ & $[0,0,0,0,0,0]$ & $[2b+c+2d+e+4,a+f,c+e,d-1]$, $d\geq 1$\\
$2$ & $[a,b,c,d,e,f]$ & $[0,0,0,0,0,0]$ & $[c+e,a+f,c+2d+e+2,b-1]$, $b\geq 1$
\end{tabular}
\label{table-EI-string-part-card0}
\end{table}

\begin{table}[H]
\centering
\caption{Strings of  $\widehat{\rm EI}^{\mathrm{d}}$ with $|{\rm supp}(x)|=1$}
\begin{tabular}{r|c|c|c|r}
$\# x$ &   $\lambda$   & $\nu$ &spin LKTs  \\
\hline
$3$ & $[a, b, c, 1, e, f]$ & $[0, -\frac{1}{2}, -\frac{1}{2}, 1, -\frac{1}{2}, 0 ]$ & $[c+e+2, a+f, c+e+2, b]$\\
$4$ & $[a, 1, c, d, e, f]$ & $[0, 1, 0, -\frac{1}{2}, 0, 0]$ & $[c+2d+e+4, a+f, c+e, d]$\\
\end{tabular}
\label{table-EI-string-part-card1}
\end{table}

\begin{table}[H]
\centering
\caption{Strings of  $\widehat{\rm EI}^{\mathrm{d}}$ with $|{\rm supp}(x)|=2$}
\begin{tabular}{r|c|c|c|r}
$\# x$ &   $\lambda$   & $\nu$ &spin LKTs  \\
\hline
$5$ & $[a,b,1,d,1,f]$ & $[-\frac{1}{2},0,1,-1,1, -\frac{1}{2}]$ & $[2d+4,a+f+1,0,b+d+1]$\\
$6$ & $[a,b,1,d,1,f]$ & $[-\frac{1}{2},0,1,-1,1, -\frac{1}{2}]$ & $[2b+2d+6,a+f+1,0,d]$, $d\geq 1$\\
$7$ & $[a,b,1,d,1,f]$ & $[-\frac{1}{2},0,1,-1,1, -\frac{1}{2}]$ & $[0,a+f+1,2d+4,b-1]$, $b\geq 1$\\
$8$ & $[1,b,c,d,e,1]$ & $[1,0,-\frac{1}{2},0,-\frac{1}{2},1]$ & $[c+2d+e+3,0,c+e+1,b+d]$\\
$9$ & $[1,b,c,d,e,1]$ & $[1,0,-\frac{1}{2},0,-\frac{1}{2},1]$ & $[2b+c+2d+e+5,0,c+e+1,d-1]$, $d\geq 1$\\
$10$ & $[1,b,c,d,e,1]$ & $[1,0,-\frac{1}{2},0,-\frac{1}{2},1]$ & $[c+e+1,0,c+2d+e+3,b-1]$, $b\geq 1$\\
$13$ & $[a,1,c,1,e,f]$ & $[0,1,-1,1,-1,0]$ & $[c+e+4,a+f,c+e+2,0]$
\end{tabular}
\label{table-EI-string-part-card2}
\end{table}

\begin{table}[H]
\centering
\caption{Strings of  $\widehat{\rm EI}^{\mathrm{d}}$ with $|{\rm supp}(x)|=3$}
\begin{tabular}{r|c|c|c|r}
$\# x$ &   $\lambda$   & $\nu$ &spin LKTs  \\
\hline
$14$ & $[a,1,1,d,1,f]$ & $[-\frac{1}{2},1,1,-\frac{3}{2},1,-\frac{1}{2}]$ & $[2d+6,a+f+1,0,d+1]$\\
$15$ & $[1,b,c,1,e,1]$ & $[1,-\frac{1}{2},-1,1,-1,1]$ & $[c+e+3,0,c+e+3,b]$\\
$16$ & $[1,1,c,d,e,1]$ & $[1,1,-\frac{1}{2},-\frac{1}{2},-\frac{1}{2},1]$ & $[c+2d+e+5,0,c+e+1,d]$\\
$18$ & $[a,b,1,1,1,f]$ & $[-1,-2,0,2,0,-1]$ & $[2b+8,a+f+2,0,0]$\\
$29$ & $[a-3,b-2,3,0,2,f-1]$ & $[-\frac{1}{2},-\frac{1}{2},\frac{1}{2},0,\frac{1}{2},-\frac{1}{2}]$ & $[1,a+f,3,b-1]$, $[3,a+f,1,b]$
\end{tabular}
\label{table-EI-string-part-card3}
\end{table}

\begin{table}[H]
\centering
\caption{Strings of  $\widehat{\rm EI}^{\mathrm{d}}$ with $|{\rm supp}(x)|=4$}
\begin{tabular}{r|c|c|c|r}
$\# x$ &   $\lambda$   & $\nu$ &spin LKTs  \\
\hline
$37$ & $[1,1,c,1,e,1]$ & $[1,1,-\frac{3}{2},1,-\frac{3}{2},1]$ & $[c+e+5,0,c+e+3,0]$\\
$43$ & $[a-1,2,1,0,1,f]$ & $[-1,2,0,0,0,-1]$ & $[6,a+f+2,0,0]$\\
$48$ & $[1,b,1,d,1,1]$ & $[1,0,1,-2,1,1]$ & $[2d+6,0,0,b+d+2]$\\
$48$ & $[1,b,1,d-1,1,1]$ & $[\frac{1}{2},0,\frac{1}{2},-1,\frac{1}{2},\frac{1}{2}]$ & $[2d+3,1,1,b+d]$\\
$49$ & $[1,b,1,d,1,1]$ & $[1,0,1,-2,1,1]$ & $[2b+2d+8,0,0,d+1]$, $d\geq 1$\\
$49$ & $[1,b,1,d-1,1,1]$ & $[\frac{1}{2},0,\frac{1}{2},-1,\frac{1}{2},\frac{1}{2}]$ & $[2b+2d+5,1,1,d-1]$, $d\geq 1$\\

$50$ & $[1,b,1,d,1,1]$ & $[1,0,1,-2,1,1]$ & $[0,0,2d+6,b-1]$, $b\geq 1$\\
$50$ & $[1,b,1,d-1,1,1]$ & $[\frac{1}{2},0,\frac{1}{2},-1,\frac{1}{2},\frac{1}{2}]$ & $[1,1,2d+3,b-1]$, $b\geq 1$\\
$51$ & $[a-1,b-1,1,1,1,f]$ & $[-1,-1,1,0,1,-1]$ & $[2,a+f+1,2,b]$, $b\geq 1$\\
$51$ & $[a,b,1,1,1,f]$ & $[-\frac{3}{2},-2,1,1,1,-\frac{3}{2}]$ & $[0,a+f+4,0,b+2]$\\
$62$ & $[a-3,3,3,-2,2,f-1]$ & $[-1,1,1,-1,1,-1]$ & $[3,a+f,1,1]$, $[5,a+f,1,0]$\\
$90$ & $[a,1,1,0,1,f]$ & $[-\frac{3}{2},2,1,-1,1,-\frac{3}{2}]$ & $[2,a+f+2,0,2]$\\
$162$ & $[a-2,1,2,0,2,f-1]$ & $[-2,1,1,0,1,-2]$ & $[4,a+f+2,0,1]$\\
$204$ & $[a,1,1,1,1,f]$ & $[-3,1,1,1,1,-3]$ & $[0,a+f+7,0,0]$
\end{tabular}
\label{table-EI-string-part-card4}
\end{table}

\begin{table}[H]
\centering
\caption{Strings of  $\widehat{\rm EI}^{\mathrm{d}}$ with $|{\rm supp}(x)|=5$}
\begin{tabular}{r|c|c|c|r}
$\# x$ &   $\lambda$   & $\nu$ &spin LKTs  \\
\hline
$58$ & $[0,b-1,2,-1,2,0]$ & $[0,-\frac{1}{2},1,-1,1,0]$ & $[1,1,3,b-1]$, $[3,1,1,b]$\\
$73$ & $[1,1,1,d,1,1]$ & $[1,1,1,-\frac{5}{2},1,1]$ & $[2d+8,0,0,d+2]$\\
$73$ & $[1,1,1,d-1,1,1]$ & $[\frac{1}{2},1,\frac{1}{2},-\frac{3}{2},\frac{1}{2},\frac{1}{2}]$ & $[2d+5,1,1,d]$\\
$84$ & $[1,b-1,0,2,0,1]$ & $[1,-2,-1,2,-1,1]$ & $[2b+7,1,1,0]$\\
$142$ & $[2,b-2,1,1,0,3]$ & $[\frac{3}{2},-2,-\frac{1}{2},1,-\frac{1}{2},\frac{3}{2}]$ & $[1,3,1,b+1]$\\
$142$ & $[2,b-2,1,1,-1,2]$ & $[1,-1,0,0,0,1]$ & $[1,1,3,b-1]$, $[3,1,1,b]$, $b\geq 1$\\
$193$ & $[1,b,1,1,1,1]$ & $[0,-4,2,0,2,0]$ & $[2b+12,0,0,0]$\\
$283$ & $[3,b-4,2,0,3,1]$ & $[1,-3,1,0,1,1]$ & $[0,3,0,b+2]$, $b\geq 1$\\
$283$ & $[3,b-5,2,0,3,1]$ & $[1,-2,\frac{1}{2},0,\frac{1}{2},1]$ & $[2,1,2,b]$, $b\geq 1$\\

$359$ & $[1,b-3,1,1,1,1]$ & $[\frac{1}{2},-\frac{3}{2},\frac{1}{2},0,\frac{1}{2},\frac{1}{2}]$ & $[1,1,3,b-1]$, $[3,1,1,b]$, $b\geq 1$\\
$359$ & $[2,b-9,2,3,2,2]$ & $[\frac{1}{2},-\frac{3}{2},\frac{1}{2},0,\frac{1}{2},\frac{1}{2}]$ & $[1,1,3,b-1]$, $[3,1,1,b]$, $b\geq 1$\\
$359$ & $[1,b,1,1,1,1]$ & $[1,-\frac{9}{2},1,1,1,1]$ & $[0,0,0,b+5]$
\end{tabular}
\label{table-EI-string-part-card5}
\end{table}

\section{The set $\widehat{G_{2(2)}}^d$}\label{sec-G}

This section aims to classify the Dirac series for $G_{2(2)}$, which
 is realized in $\texttt{atlas}$ via the command
\texttt{G:G2\_s}. This equal rank group is centerless, connected, but \emph{not} simply connected.

We adopt the simple roots of $\Delta^+(\frg, \frt_f)$ and  $\Delta^+(\frk, \frt_f)$ as in Knapp \cite[Appendix C]{Kn}.
In particular, its Vogan diagram is presented in Fig.~\ref{Fig-G-Vogan}, where $\alpha_1=(1,-1,0)$ is short, while $\alpha_2=(-2, 1, 1)$ is long. In this case, $\Delta^+(\frg, \frt_f)$ is $G_2$, while $\Delta^+(\frk, \frt_f)$ is $A_1\times A_1$. Indeed,
$\Delta^+(\frk, \frt_f)$ consists of two orthogonal roots: $\gamma_1:=\alpha_1$, $\gamma_2:=3\alpha_1+2\alpha_2$. Let $\xi_1, \xi_2$ (resp., $\varpi_1, \varpi_2$) be the corresponding fundamental weights for $\Delta^+(\frg, \frt_f)$ (resp., $\Delta^+(\frk, \frt_f)$).
We will express the $\texttt{atlas}$ parameters $\lambda$ and $\nu$ in terms of $\xi_1, \xi_2$, and express highest weights of $K$-types in terms of $\varpi_1, \varpi_2$. Note that the $K$-types are parameterized via the highest weight theorem by $[a, b]$ such that $a, b$ are members of $\bbN$ and that $a+b$ is even.

\begin{figure}[H]
\centering
\scalebox{0.6}{\includegraphics{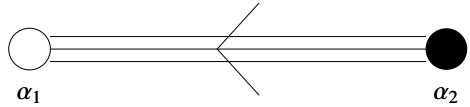}}
\caption{The Vogan diagram for $G_{2(2)}$}
\label{Fig-G-Vogan}
\end{figure}

Carrying out the algorithm in Section \ref{sec-algorithm} for $G_{2(2)}$ leads us to  Table \ref{table-G-scattered-part}, where in the last row sits the trivial representation.

The strings of $\widehat{G_{2(2)}}^d$ are given in Table
\ref{table-G-string-part}.
In this table, the coordinates $a, b$ are members of $\bbN$ such that
the infinitesimal character
$$
\Lambda=[a, b]
$$
and that
\begin{equation}\label{G-common-requirement}
a+b\geq 1.
\end{equation}
In some cases, there are stronger requirements for certain coordinates. They will be put within the column ``spin LKTs". Every spin-lowest $K$-type in these tables occurs with multiplicity one.

\begin{table}[H]
\centering
\caption{FS-scattered members of $\widehat{\rm G_{2(2)}}^{\mathrm{d}}$}
\begin{tabular}{r|c|c|c|c|c|r}
$\# x$ &   $\lambda$  & $\nu$ & spin LKTs & mult & u-small & string limit\\
\hline
$8$ & $[3,0]$ & $[1,0]$ & $ [3,1]$ & $1$ & Yes & $\#4$, $b=-1$\\
$9$ & $[1,1]$ & $[1,0]$ & $[3,1]$ & $1$ & Yes  & $\#3$, $a=-2$\\
$9$ & $[1,1]$ & $[1,1]$ & $[0,0]$ & $1$ & Yes &
\end{tabular}
\label{table-G-scattered-part}
\end{table}

\begin{table}[H]
\centering
\caption{Strings of $\widehat{G_{2(2)}}^{\mathrm{d}}$}
\begin{tabular}{r|c|c|c|r}
$\# x$ &   $\lambda$   & $\nu$ &spin LKTs  \\
\hline
$0$ & $[a,b]$ & $[0,0]$ & $[a+3b+2,a+b]$\\
$1$ & $[a,b]$ & $[0,0]$ & $[2a+3b+3,b-1]$, $b\geq 1$\\
$2$ & $[a,b]$ & $[0,0]$ & $[a-1,a+2b+1]$, $a\geq 1$\\
$3$ & $[a,1]$ & $[-\frac{3}{2},1]$ & $[a+2,a+2]$\\
$4$ & $[1,b]$ & $[1,-\frac{1}{2}]$ & $[3b+4,b]$
\end{tabular}
\label{table-G-string-part}
\end{table}

There are seven \emph{proper} $\theta$-stable parabolic subgroups of $\texttt{G2\_s}$. However, only the following five among them have the form \texttt{Parabolic:(support(x), x)} for certain KGB element $\texttt{x}$:
\begin{verbatim}
([],KGB element #0)
([],KGB element #1)
([],KGB element #2)
([1],KGB element #3)
([0],KGB element #4)
\end{verbatim}
The Levi subgroups of the first three $\theta$-stable parabolic  subgroups are described as
\begin{verbatim}
compact connected quasisplit real group with Lie algebra 'u(1).u(1)'
\end{verbatim}
by \texttt{atlas}, while those of the last two are described as
\begin{verbatim}
connected quasisplit real group with Lie algebra 'sl(2,R).u(1)'
\end{verbatim}
Since we are doing cohomological induction in the way of Theorem \ref{thm-Vogan}, this justifies from another aspect that there are five strings for $\widehat{G_{2(2)}}^{\mathrm{d}}$ in total.

\begin{example}\label{exam-G2-string-starting-point}
The following two representations are scattered members of $\widehat{G_{2(2)}}^{\mathrm{d}}$ in the sense of \cite{D17}. Namely, they can not be cohomologically induced from any \emph{good} module of any proper $\theta$-stable Levi subgroup of $G_{2(2)}$.
\begin{verbatim}
final parameter(x=3,lambda=[0,1]/1,nu=[-3,2]/2)
final parameter(x=4,lambda=[1,0]/1,nu=[2,-1]/2)
\end{verbatim}
However, it is more convenient to include them into the strings in Table \ref{table-G-string-part} with $\#x=3$ and $\#x=4$ respectively as the starting points.
\hfill\qed
\end{example}

\begin{example}\label{exam-G-scattered-via-string}
It is very interesting to note that both of the two non-trivial FS-scattered members in Table \ref{table-G-scattered-part} can be viewed as limits of certain strings.

Firstly, let us input the starting representation of the string with $\#x=3$ in Table \ref{table-G-string-part}.
\begin{verbatim}
G:G2_s
set p=parameter(KGB(G,3), [0,1], [-3/2,1])
set (P, q)=reduce_good_range(p)
q
Value: final parameter(x=2,lambda=[-5,2]/2,nu=[-3,2]/2)
goodness(q,G)
Value: "Weakly good"
\end{verbatim}
The last output says that the inducing module \texttt{q} is weakly good. Now we minus the first coordinate of the lambda parameter of \texttt{q} by $2$, and get \texttt{qm2}, which is no longer weakly good.
\begin{verbatim}
set qm2=parameter(x(q),lambda(q)-[2,0],nu(q))
qm2
Value: final parameter(x=2,lambda=[-9,2]/2,nu=[-3,2]/2)
goodness(qm2, G)
Value: "None"
theta_induce_irreducible(qm2,G)
Value:
1*parameter(x=9,lambda=[1,1]/1,nu=[1,0]/1) [0]
1*parameter(x=6,lambda=[4,-1]/1,nu=[3,-1]/2) [3]
\end{verbatim}
The last output says that the second representation of Table \ref{table-G-scattered-part} occurs as a composition factor of the module cohomologically induced from \texttt{qm2}.
Therefore, we may view  that FS-scattered module as the limit case of the string with $\#x=3$ in Table \ref{table-G-string-part} by taking $a=-2$.

Similarly, we can view the first representation of Table \ref{table-G-scattered-part}
as the limit case of the string with $\#x=4$ in Table \ref{table-G-string-part} by taking $b=-1$.
\hfill\qed
\end{example}
\begin{rmk}
We learned ``string limit" from the referee of \cite{D17E6} and Daniel Wong.
\end{rmk}

\begin{cor}
Let $G$ be \emph{$\texttt{G2\_s}$}. Then all the $K$-types whose spin norm is equal to their lambda norm are exactly
\begin{itemize}
\item[$\bullet$]
$[a+3b+2,a+b]$, where $a, b\geq 0$ and $a+b\geq 1$;
\item[$\bullet$]
$[2a+3b+3,b-1]$, where $a\geq 0$ and $b\geq 1$;
\item[$\bullet$]
$[a-1,a+2b+1]$, where $a\geq 1$ and $b\geq 0$.
\end{itemize}
\end{cor}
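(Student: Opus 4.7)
The plan is to mimic the proofs of the analogous corollaries for FII, EIV, and FI given earlier in the paper. The key observation is that the strings with $\#x=0,1,2$ in Table \ref{table-G-string-part} all have $\nu=[0,0]$, hence correspond to irreducible tempered representations of $G_{2(2)}$ (by the Langlands classification in the \texttt{atlas} parametrization). Moreover, the spin-lowest $K$-types appearing in these three rows match precisely the three bullet-point families in the statement, with the constraints $a+b\geq 1$ from \eqref{G-common-requirement} producing exactly the listed inequalities $a,b\geq 0$ with $a+b\geq 1$ (for $\#x=0$), $a\geq 0,b\geq 1$ (for $\#x=1$), and $a\geq 1,b\geq 0$ (for $\#x=2$).

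First I would invoke Theorem 1.2 of \cite{DD16}, which characterizes those $K$-types $\mu$ satisfying $\|\mu\|_{\rm spin}=\|\mu\|_{\rm lambda}$ exactly as the spin-lowest $K$-types of irreducible tempered $(\frg,K)$-modules with non-vanishing Dirac cohomology. Next I would argue that the three strings with $\#x=0,1,2$ enumerate all such tempered modules: the entries with $\nu=[0,0]$ in Table \ref{table-G-string-part} are tempered, and conversely any tempered Dirac-series representation appears somewhere in the classification, but must have trivial continuous parameter $\nu$, so it must occur among these three strings (strings with $\#x\geq 3$ have non-zero $\nu$ and hence are non-tempered).

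Reading off the ``spin LKTs'' column of Table \ref{table-G-string-part} for $\#x=0,1,2$ then yields exactly the three families in the statement, completing the proof. The main work has already been carried out: the bulk of the argument is the classification in Table \ref{table-G-string-part} itself, while the present corollary is a direct consequence via \cite[Theorem 1.2]{DD16}. I do not anticipate any substantial obstacle; the only mild subtlety is to check that no tempered Dirac-series representation has been missed, but this follows from the completeness of the classification $\widehat{G_{2(2)}}^{\mathrm{d}}$ established in Tables \ref{table-G-scattered-part} and \ref{table-G-string-part} together with the observation that FS-scattered members from Table \ref{table-G-scattered-part} all have $\nu\neq 0$, hence are non-tempered and contribute no new $K$-types to the list.
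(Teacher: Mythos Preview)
Your proposal is correct and follows essentially the same approach as the paper's own proof: identify the strings with $\#x=0,1,2$ (those with $\nu=[0,0]$) as precisely the irreducible tempered representations of $G_{2(2)}$ with non-zero Dirac cohomology, and then read off their spin-lowest $K$-types via Theorem~1.2 of \cite{DD16}. Your additional remark verifying that the FS-scattered members in Table~\ref{table-G-scattered-part} are non-tempered is a harmless elaboration that the paper leaves implicit.
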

\begin{proof}
Note that the strings with $\#x=0, 1, 2$ give precisely all the irreducible tempered representations of $\texttt{G2\_s}$ with non-zero Dirac cohomology. The result follows from Theorem 1.2 of \cite{DD16} and Table \ref{table-G-string-part}.
\end{proof}

Let us illustrate the above corollary in Fig.~\ref{Fig-G-eK}, where the horizontal (resp., vertical) axis gives the $a$-coordinate (resp., the $b$-coordinate) in $a\varpi_1+b\varpi_2$.  We use black dots to stand for those $K$-types whose spin norm equals to their lambda norm, while the other $K$-types are represented by circles. Note that $\|\varpi_2\|=\sqrt{3}\|\varpi_1\|$ and that $a+b$ should be even.

\begin{figure}[H]
\centering
\scalebox{0.6}{\includegraphics{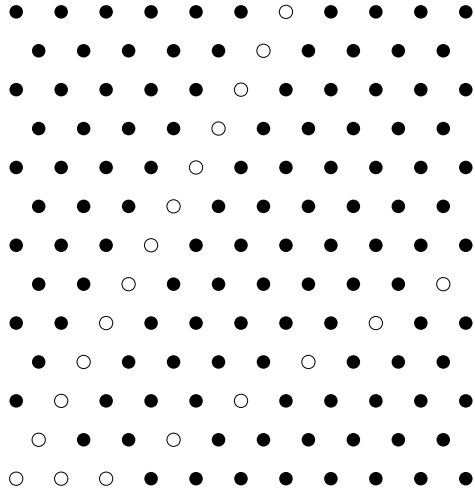}}
\caption{Some $K$-types for $G_{2(2)}$}
\label{Fig-G-eK}
\end{figure}

%
%
%
%

%

\medskip
\centerline{\scshape Funding}
Dong was supported by NSFC grant 11571097 (2016-2019).

\medskip
\centerline{\scshape Acknowledgements}
We thank Kei Yuen Chan, Pavle Pand\v zi\'c and Daniel Wong for helpful discussions.
We thank the following \texttt{atlas} mathematicians for their help, and for their  interest in the representations reported in this article: Jeffrey Adams, Steve Miller, Annegret Paul, David Vogan. We thank the referee for very careful reading, and very helpful suggestions.

\end{document}